\newcommand{\R}{\mathbb{R}}
\newcommand{\T}{\mathbb{T}}
\newcommand{\C}{\mathbb{C}}
\newcommand{\vphi}{\varphi}
\newcommand{\Id}{\textup{Id}}
\newcommand{\A}{\mathcal{A}}
\newcommand{\wt}[1]{\widetilde{#1}}
\newcommand{\grad}{\textup{grad}\,}
\renewcommand{\del}{\partial}
\renewcommand{\L}{\mathcal{L}}
\newcommand{\V}{\mathcal{V}}
\newcommand{\pr}{\text{pr}}
\newcommand{\J}{\mathfrak{J}}
\newtheorem{theorem}{Theorem}
\newtheorem{definition}[theorem]{Definition}
\newtheorem{lemma}[theorem]{Lemma}
\newtheorem{remark}[theorem]{Remark}
\newtheorem{proposition}[theorem]{Proposition}
\newtheorem{example}[theorem]{Example}
\tikzset{every picture/.style={line width=0.75pt}} 
\numberwithin{theorem}{section}
\title{Floer sections in multisymplectic geometry}
\author{Ronen Brilleslijper and Oliver Fabert }
\date{}
\begin{document}

\maketitle

\begin{abstract}
    In symplectic geometry, Floer theory is the most important tool to prove the existence of time-periodic solutions in Hamiltonian mechanics. The core observation is that the $L^2$-gradient lines of the symplectic action functional are pseudo-holomorphic curves, enabling the use of elliptic PDE methods. Multisymplectic geometry is the geometric framework underlying Hamiltonian field theory, where the time line is replaced by higher-dimensional manifolds. In the case of two dimensions and using complex structures, we introduce a novel multisymplectic framework that is fit for the generalization of the elliptic methods from symplectic geometry. Besides proving a Darboux theorem, we show that the $L^2$-gradient lines of our multisymplectic action functional are now pseudo-Fueter curves defined using a compatible almost hyperk\"ahler structure.
\end{abstract}

\tableofcontents


In symplectic geometry, one studies solutions $u:S^1\to M$ to Hamilton's equation 
\begin{align}\label{eq:Hamilton}
\omega(\cdot,\dot{u}(t))=dH_t(u(t)) 
\end{align}
on a symplectic manifold $(M,\omega)$ with periodic Hamiltonian $H:S^1\times M\to \R$. One of the most powerful tools to study the existence of these 1-dimensional objects is Floer theory. The idea is that solutions to \cref{eq:Hamilton} can be seen as critical points of the action functional. One can therefore study negative gradient lines of the action functional and use ideas from Morse theory to find solutions to Hamiltons equation. These negative gradient lines are referred to as Floer curves and are given by maps $u:\R\times S^1\to M$ satisfying the Floer equation
\begin{align}\label{eq:symplFloer}
    \del_su+J\del_tu=\nabla H(u),
\end{align}
where $J$ is an almost complex structure compatible with $\omega$ and the gradient is taken with respect to the induced metric $\omega(J\cdot,\cdot)$. Floer theory fundamentally uses the fact that \cref{eq:symplFloer} is an elliptic PDE. 

When moving from Hamiltonian mechanics to field theory, 1-dimensional time is replaced by higher-dimensional manifolds. Multisymplectic geometry is a generalization of symplectic geometry to the setting where we study maps from $d$-dimensional manifolds. In \cref{sec:multisympl} we will dive more into the framework of multisymplectic geometry. For more background on multisymplectic and related geometries see \cite{deLeon,forger2013multisymplectic,marsden1999multisymplectic}. A natural question is whether we can study the existence of these higher-dimensional objects using techniques similar to Floer theory on symplectic manifolds. So far, there is no generalization of Floer theory to the multisymplectic setting. The reason is that one of the most fundamental equations in multisymplectic geometry is not elliptic. These equations, called the De Donder-Weyl equations, locally have the following form
\begin{align}\label{eq:DDW}
\begin{split}
     -\del_1 p^a_1-\del_2 p^a_2&=\frac{\del H}{\del q^a}\\
    \del_1 q^a&=\frac{\del H}{\del p_1^a}\\
   \del_2 q^a&=\frac{\del H}{\del p_2^a},
   \end{split}
\end{align}
for $d=2$ and local coordinates $q^a,p_1^a,p_2^a$ on the multisymplectic manifold. It can easily be seen that the differential operator on the left-hand side of these equations has an infinite-dimensional kernel (see \cite{from1to2}). This makes the use of Floer-theoretic methods impossible. 

If the underlying 2-dimensional manifold is a torus, the multisymplectic framework simplifies drastically to what we call polysymplectic geometry in the sense of \cite{gunther1987polysymplectic}.\footnote{The authors have been made aware that the term polysymplectic nowadays can have a slightly different meaning.} In polysymplectic geometry, the same problem arises as mentioned above, namely that the underlying equations are not elliptic. However, in \cite{from1to2} a formalism was developed in which the existence of 2-dimensional objects can be studied, in analogy with the 1-dimensional objects, that can be studied in symplectic geometry. The resulting theory, which we called complex-regularized polysymplectic (CRPS) geometry, provides a subclass of polysymplectic manifolds on which the underlying equations are in fact elliptic.

In CRPS geometry, one starts with a complex manifold $(W,I)$. A closed $\R^2$-valued 2-form $\Omega=\omega_1\otimes\del_1+\omega_2\otimes\del_2$ on $W$ is called CRPS if $\ker\Omega^\flat=0$ and $\omega_2=-\omega_1(\cdot,I\cdot)$. In \cite{from1to2} it was shown, that this implies that both $\omega_1$ and $\omega_2$ are in fact symplectic forms. The main example of a CRPS manifold, is the cotangent bundle of a complex manifold $Q$ with its induced complex structure $I$. An easy check shows that $\omega_1=d\theta$ and $\omega_2=-d(\theta\circ I)$ yield a CRPS form, where $\theta=pdq$ is the canonical Liouvile 1-form. Locally, all CRPS manifolds are isomorphic.

Given a CRPS manifold $(W,I,\Omega=\omega_1\otimes\del_1+\omega_2\otimes\del_2)$, there is an action functional on the space of maps $\T^2\to W$. This action functional arises from the multisymplectic form $\wt\Omega=\omega_1\wedge dt_2-\omega_2\wedge dt_1$ on $W\times\T^2$, that is obtained by contracting $\Omega$ with the volume form $dt_1\wedge dt_2$ on $\T^2$. Assume that $\omega_j=d\theta_j$. Then we can define 
\begin{align*}
    \A(Z)=\int_{\T^2}\wt{Z}^*\wt{\Theta} = \int_{\T^2}(\theta_1(\del_1Z)+\theta_2(\del_2Z))dt_1\wedge dt_2,
\end{align*}
where $\wt{Z}=Z\times\Id:\T^2\to W\times\T^2$ and $\wt\Theta$ is the primitive of $\wt\Omega$. Given a Hamiltonian $H:W\to\R$, we may also define $\A_H(Z)=\A(Z)-\int_{T^2}H(Z)dt_1\wedge dt_2$. Critical point of $\A_H$ are given by the equation
\begin{align}\label{eq:polysympl}
    \omega_1(\cdot,\del_1Z)+\omega_2(\cdot,\del_2Z)=dH.
\end{align}

Now, as shown in \cite{from1to2}, there exist anti-commuting almost complex structures $J$ and $K$ on $W$, such that $\omega_1=g(\cdot,J\cdot)$ and $\omega_2=g(\cdot,K\cdot)$, for some Riemannian metric $g$. Taking the gradient of $\A_H$ with respect to the $L^2$-metric induced by $g$, yields $\grad\A_H(Z)=J\del_1Z+K\del_2Z-\nabla H(Z)$. In \cite{from1to2}, negative gradient lines of $\A_H$ were used to determine a lower bound on the number of solutions of \cref{eq:polysympl}, using a similar reasoning as in Floer theory. A crucial fact there was that for $H=0$, these negative gradient lines are given by the Fueter equation, that has been studied by \cite{HNS,walpuski2017compactness,salamon2013three}. See also the work by Doan-Rezchikov and Kontsevich-Soibelman in \cite{doanrez,kontsevichsoibel} towards defining holomorphic Floer theory.

For the construction above it is crucial that $\T^2$ is parallelizable, since the $\R^2$-valued form $\Omega$ can then be seen as taking values in the tangent bundle of $\T^2$. In this article, we would like to present a formalism in which we can extend the above discussion to maps from other Riemann surfaces $\Sigma$. As discussed above, we need to leave the world of polysymplectic geometry and use the multisymplectic framework to study maps on different surfaces. The idea is that the multisymplectic 3-form $\wt\Omega$ on $W\times\T^2$ actually generalizes well to other surfaces. In the next section, we will start by reviewing the setup of multisymplectic geometry, before specifying a subclass of manifolds, which we call complex-regularized multisymplectic (CRMS) manifolds, that are suitable for studying Floer curves like explained above. Even though the motivation of this article is to generalize the framework of \cite{from1to2} to surfaces other than $\T^2$, this paper is self-contained so no results from \cite{from1to2} are needed in order to follow the exposition.

This paper is organized as follows. In \cref{sec:multisympl} we give an introduction to multisymplectic geometry by discussing the extended multimomentum bundle that generalizes the cotangent bundle from symplectic geometry. Subsequently, we incorporate the complex structures that are needed to produce elliptic equations in \cref{sec:cotagent}. \Cref{sec:CRPS} relates the construction from \cref{sec:cotagent} to the CRPS setting from \cite{from1to2} in the case where $\Sigma=
\T^2$. Next, we study the structures on the extended multimomentum bundle when $\Sigma\neq\T^2$ in \cref{sec:CRMS} that motivate the general definition of CRMS forms in \cref{sec:generaldef}. A version of the Darboux theorem for CRMS forms is proven in \cref{sec:Darboux}. Finally, in \cref{sec:Floer}, we show that the $L^2$-gradient lines of the multisymplectic action functional on CRMS bundles satisfy the perturbed Fueter equation, which is the analogue of the perturbed Cauchy-Riemann equation from symplectic geometry. This allows for the generalization of the elliptic methods that led to the success of symplectic geometry since the 1980s to multisymplectic geometry, which was the motivation that led to this article.

\section{Multisymplectic geometry}\label{sec:multisympl}
We will start by reviewing the setup of multisymplectic geometry. For clarity of the concepts and to see what the resulting equations look like, we will indicate local coordinates for the objects introduced. This section is based on chapter 15 of \cite{deLeon}.

Fields are defined to be sections of a fiber bundle $\pi:Y\to \Sigma$, where $\Sigma$ is a surface. Eventually, we want to study maps $\Sigma\to Q$, so that we are interested only in the trivial fiber bundle $Y=\Sigma\times Q$, but since the setup of the theory makes sense for general fiber bundles, we will not yet restrict ourselves here. Locally, we say that $\Sigma$ has coordinates $t:=(t_1,t_2)$ and the fiber $Y_t$ of $Y$ over $t$ has coordinates $q=(q^1,\ldots,q^n)$. Let $\Lambda^2(Y)\to Y$ be the bundle of $2$-forms over Y. We can define a subbundle by 
\[
\Lambda^2_2(Y) := \{\lambda\in\Lambda^2(Y)\mid u\lrcorner(v\lrcorner\lambda)=0\text{ for all }u,v\in\ker d\pi\},
\]
called the \textit{extended multimomentum bundle}. We denote $\kappa:\Lambda^2_2(Y)\to Y$ to be the bundle map. Given the local coordinate expressions for $Y$ and $\Sigma$, we get induced coordinates $(t_1,t_2,q^a,p_1^a,p_2^a,p)$ on $\Lambda^2_2(Y)$, such that an element of $\Lambda^2_2(Y)_y$ is given by \[pdt_1\wedge dt_2+\sum_a\left( p_1^a dq^a\wedge dt_2-p_2^a dq^a\wedge dt_1\right),\]
where $y=(t_1,t_2,q^a)$. 

\begin{definition}
    The \emph{tautological form} $\Theta$ on $\Lambda^2_2(Y)$ is defined by 
    \[\Theta_\lambda = \kappa^*\lambda\]
    for $\lambda\in\Lambda^2_2(Y)$. The canonical multisymplectic form on the extended multimomentum bundle is $\Omega=d\Theta$.
\end{definition}

In local coordinates we have that $\Theta$ and $\Omega$  are given by
\begin{align*}
    \Theta&=pdt_1\wedge dt_2+\sum_a\left( p_1^a dq^a\wedge dt_2-p_2^a dq^a\wedge dt_1\right)\\
    \Omega&=dp\wedge dt_1\wedge dt_2+\sum_a\left( dp_1^a\wedge dq^a\wedge dt_2-dp_2^a\wedge dq^a\wedge dt_1\right).
\end{align*}
Note that $\Omega$ is a non-degenerate $3$-form on $\Lambda^2_2(Y)$ in the sense that $X\mapsto X\lrcorner\Omega$ is an injective map on $T_\lambda\Lambda^2_2(Y)$ for every $\lambda\in\Lambda^2_2(Y)$. 

Inside $\Lambda^2_2(Y)$, we have the subbundle $\pi^*\Lambda^2(\Sigma)$ of 2-forms on the base. We define the \emph{restricted multimomentum bundle} to be the quotient \[\wt{\Lambda^2_2(Y)}=\frac{\Lambda^2_2(Y)}{\pi^*\Lambda^2(\Sigma)}\]
and let $\mu:\Lambda^2_2(Y)\to \wt{\Lambda^2_2(Y)}$ be the canonical projection. Local coordinates on $\wt{\Lambda^2_2(Y)}$ are given by $(t_1,t_2,q^a,p_1^a,p_2^a)$. 

\begin{definition}
    A \emph{Hamiltonian section} is a section $h$ of $\mu$. It induces the 2-form $\Theta^h:=h^*\Theta$ and the 3-form $\Omega^h=h^*\Omega=d\Theta^h$ on the restricted multimomentum bundle. \newline
    A section $Z$ of $\sigma:\wt{\Lambda^2_2(Y)}\to\Sigma$ is called a solution of $h$, if
    \begin{align*}
        Z^*(X\lrcorner\Omega^h)=0
    \end{align*}
    for all $X\in\mathfrak{X}(\wt{\Lambda^2_2(Y)})$.
\end{definition}

Locally a Hamiltonian section is given by 
\[h(t_1,t_2,q^a,p_1^a,p_2^a)=(t_1,t_2,q^a,p_1^a,p_2^a,-H(t_1,t_2,q^a,p_1^a,p_2^a)),\]
where $H$ is a smooth local function on $\wt{\Lambda^2_2(Y)}$. Thus, in this local expression 
\[\Omega^h=-dH\wedge dt_1\wedge dt_2+\sum_a\left( dp_1^a\wedge dq^a\wedge dt_2-dp_2^a\wedge dq^a\wedge dt_1\right).\]
Let $Z$ be a section of $\sigma$, locally given by $(t_1,t_2)\mapsto(t_1,t_2,q^a,p_1^a,p_2^a)$. We see that 
\begin{align*}
    Z^*\left(\frac{\del}{\del q^a}\lrcorner\Omega^h\right) &= -\left(\frac{\del H}{\del q^a}+\del_1 p^a_1+\del_2 p^a_2\right)dt_1\wedge dt_2\\
    Z^*\left(\frac{\del}{\del p_1^a}\lrcorner\Omega^h\right) &= -\left(\frac{\del H}{\del p_1^a}-\del_1 q^a\right)dt_1\wedge dt_2\\
    Z^*\left(\frac{\del}{\del p_2^a}\lrcorner\Omega^h\right) &= -\left(\frac{\del H}{\del p_2^a}-\del_2 q^a\right)dt_1\wedge dt_2.
\end{align*}
Here, $\del_1$ and $\del_2$ denote the derivatives with respect to $t_1$ and $t_2$. Therefore, $Z$ is a solution of $h$ if and only if in local coordinates it satisfies the De Donder-Weyl \cref{eq:DDW}. A straightforward check shows that the equations obtained by contracting $\Omega^h$ with $\del_1$ and $\del_2$ merely express the total derivative of $H$ in terms of its partial derivatives as 
\[\frac{d (H\circ Z)}{d t_i} =\frac{\del H}{\del t_i}+ \sum_a\left(\frac{\del H}{\del q^a}\del_i q^a+\frac{\del H}{\del p^a_1}\del_{i} p^a_1 +\frac{\del H}{\del p^a_2}\del_{i}p^a_2\right).\]

Alternatively, when $\Sigma$ is a closed surface, an easy calculation shows that $Z$ is a solution of $h$ if and only if it is a critical point of the action functional $\A_h$ defined by 
\begin{align}\label{eq:action}
\A_h(Z) = \int_\Sigma Z^*\Theta^h.
\end{align}

This action functional is not suitable to study Floer theory, for similar reasons as in the polysymplectic framework (see \cite{from1to2}). Thus, we restrict ourselves to a smaller class.

\section{Incorporating complex structures}\label{sec:cotagent}
Instead of starting with a general fiber bundle, like in \cref{sec:multisympl}, we now consider a holomorphic fiber bundle $\pi:(Y,i)\to(\Sigma,j)$. I.e. $(Y,i)$ is a complex manifold, $(\Sigma, j)$ a Riemann surface, the bundle map satisfies $d\pi\circ i=j\circ d\pi$ and locally $Y$ is biholomorphic to the trivial bundle.\footnote{Again, we are eventually only interested in the trivial bundle, but provide a more general setup of the theory.} 

Using the complex structure $i$, we may define a subbundle $\Lambda^C(Y)$ of the extended multimomentum bundle by
\[\Lambda^C(Y) := \{\lambda\in\Lambda^2_2(Y)\mid \lambda(i\cdot,i\cdot)=\lambda\}.\] 
Note that this definition makes sense, since $\lambda$ is an element of $\Lambda^2_2(Y)$ if and only if $\lambda(i\cdot,i\cdot)$ is an element of $\Lambda_2^2(Y)$.
The tautological form on $\Lambda^2_2(Y)$ restricts to a 2-form on $\Lambda^C(Y)$, which we still denote by $\Theta$. The 3-form $\Omega=d\Theta$ on $\Lambda^C(Y)$ is called the \emph{complex-regularized multisymplectic form} on $\Lambda^C(Y)$. 

We note that $\pi^*\Lambda^2(\Sigma)\subseteq\Lambda^C(Y)$, since $j$ preserves 2-dimensional volume (as can be checked in local coordinates). Therefore, we may still define the quotient
\[\wt{\Lambda^C(Y)}:=\frac{\Lambda^C(Y)}{\pi^*\Lambda^2(\Sigma)}.\]
The projection is denoted by $\mu^C:\Lambda^C(Y)\to\wt{\Lambda^C(Y)}$. 

\begin{definition}
    A \emph{complex-regularized Hamiltonian section} is defined to be a section $h$ of $\mu^C$. As before it induces the forms $\Theta^h=h^*\Theta$ and $\Omega^h=h^*\Omega$. \newline
    A section $Z$ of $\sigma^C:\wt{\Lambda^C(Y)}\to\Sigma$ is called a solution of $h$ if
    \begin{align*}
        Z^*(X\lrcorner\Omega^h)=0
    \end{align*}
    for all $X\in\mathfrak{X}(\wt{\Lambda^C(Y)})$.
\end{definition}

We want to show that $Z$ is a solution of $h$ if and only if it locally satisfies the Bridges equations from \cite{from1to2}. We may choose local coordinates $(t_1,t_2)$ on $\Sigma$ and $(t_1,t_2,q^a_1,q^a_2)$ on $Y$, such that
\begin{align*}
    j\del_1=\del_2 && i\frac{\del}{\del q_1^a}=\frac{\del}{\del q_2^a}.
\end{align*}
They induce local coordinates $(t_1,t_2,q^a_1,q^a_2,p^a_{1,1},p^a_{1,2},p^a_{2,1},p^a_{2,2},p)$ on $\Lambda^2_2(Y)$ such that an element in $\Lambda^2_2(Y)_y$ is given by
\[\lambda=pdt_1\wedge dt_2+\sum_a\left(p^a_{1,1}dq^a_1\wedge dt_2-p^a_{1,2}dq^a_1\wedge dt_1+p^a_{2,1}dq^a_2\wedge dt_2-p^a_{2,2}dq^a_2\wedge dt_1\right)\]
for $y=(t_1,t_2,q^a_1,q^a_2)$. In these coordinates, we get that 
\begin{align*}
    \lambda(i\cdot,i\cdot) = pdt_1\wedge dt_2+\sum_a\left(p^a_{2,2}dq^a_1\wedge dt_2+p^a_{2,1}dq^a_1\wedge dt_1-p^a_{1,2}dq^a_2\wedge dt_2-p^a_{1,1}dq^a_2\wedge dt_1\right).
\end{align*}
Thus, $\lambda$ is an element of $\Lambda^C(Y)$ if and only if
\begin{align*}
    p^a_{1,1}=p^a_{2,2} && p^a_{1,2}=-p^a_{2,1}
\end{align*}
for all $a$. We may define $P^a_1:=p^a_{1,1}=p^a_{2,2}$ and $P^a_2:=-p^a_{1,2}=p^a_{2,1}$. Then $\Lambda^C(Y)$ has coordinates $(t_1,t_2,q^a_1,q^a_2,P^a_1,P^a_2,p)$, where elements in the fiber are given by 
\[\lambda=pdt_1\wedge dt_2 +\sum_a(P^a_1dq^a_1+P^a_2dq^a_2)\wedge dt_2-\sum_a(P^a_1dq^a_2-P^a_2dq^a_1)\wedge dt_1.\]
Therefore, we compute $\Omega$ as
\[\Omega=dp\wedge dt_1\wedge dt_2 +\sum_a(dP^a_1\wedge dq^a_1+dP^a_2\wedge dq^a_2)\wedge dt_2-\sum_a(dP^a_1\wedge dq^a_2-dP^a_2\wedge dq^a_1)\wedge dt_1,\]
which is indeed pointwise non-degenerate.

Now $(t_1,t_2,q^a_1,q^a_2,P^a_1,P^a_2)$ define local coordinates on $\wt{\Lambda^C(Y)}$ and a Hamiltonian section is locally given by $p=-H(t_1,t_2,q^a_1,q^a_2,P^a_1,P^a_2)$ for some function $H$ as above. 
\begin{proposition}
    Let $Z:\Sigma\to\wt{\Lambda^C(Y)}$ be a section that is given by $(t_1,t_2)\mapsto (t_1,t_2,q^a_1,q^a_2,P^a_1,P^a_2)$ in the local coordinates defined above. Then $Z$ is a solution of the Hamiltonian section $h$ given locally by $p=-H(t_1,t_2,q^a_1,q^a_2,P^a_1,P^a_2)$ if and only if in local coordinates $Z$ satisfies
    \begin{align}
    \begin{split}\label{eq:Bridges}
        \frac{\del H}{\del q^a_1} &= -\del_1 P^a_1 + \del_2 P^a_2\\
        \frac{\del H}{\del q^a_2} &= -\del_1 P^a_2 - \del_2 P^a_1\\
        \frac{\del H}{\del P^a_1} &= \phantom{-}\del_1 q^a_1 + \del_2 q^a_2\\
        \frac{\del H}{\del P^a_2} &= \phantom{-}\del_1 q^a_2 - \del_2 q^a_1.
    \end{split}
    \end{align}
\end{proposition}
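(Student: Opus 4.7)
The plan is to carry out a direct coordinate computation in complete analogy with the De Donder--Weyl derivation in \cref{sec:multisympl}. Substituting $p=-H$ into the local expression for $\Omega$ on $\Lambda^C(Y)$ that was established just above yields
\begin{align*}
\Omega^h = -dH\wedge dt_1\wedge dt_2 &+ \sum_a\bigl(dP^a_1\wedge dq^a_1+dP^a_2\wedge dq^a_2\bigr)\wedge dt_2\\
&- \sum_a\bigl(dP^a_1\wedge dq^a_2-dP^a_2\wedge dq^a_1\bigr)\wedge dt_1.
\end{align*}
Since $\mathfrak{X}(\wt{\Lambda^C(Y)})$ is locally spanned over $C^\infty$ by $\del/\del t_1,\del/\del t_2,\del/\del q^a_1,\del/\del q^a_2,\del/\del P^a_1,\del/\del P^a_2$, the condition $Z^*(X\lrcorner\Omega^h)=0$ for all $X$ reduces to testing against this finite list. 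Contractions with the two horizontal fields $\del/\del t_i$ reproduce only the total-derivative identity for $H\circ Z$, as already observed in \cref{sec:multisympl}, so they impose no new constraint and I would discard them immediately.

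The core of the argument is the computation of the four remaining contractions. I would treat the $\del/\del q^a_1$ contraction first: three terms of $\Omega^h$ contain a factor of $dq^a_1$, yielding
\begin{align*}
\frac{\del}{\del q^a_1}\lrcorner\Omega^h = -\frac{\del H}{\del q^a_1}\,dt_1\wedge dt_2 - dP^a_1\wedge dt_2 - dP^a_2\wedge dt_1.
\end{align*}
Pulling back by $Z$ replaces $dP^a_j$ by $\del_1 P^a_j\,dt_1+\del_2 P^a_j\,dt_2$; only the $dt_1$-component of $dP^a_1$ and the $dt_2$-component of $dP^a_2$ survive, and after using $dt_2\wedge dt_1=-dt_1\wedge dt_2$ one obtains $Z^*(\del/\del q^a_1\lrcorner\Omega^h) = -\bigl(\frac{\del H}{\del q^a_1}+\del_1 P^a_1-\del_2 P^a_2\bigr)dt_1\wedge dt_2$, which vanishes iff the first Bridges equation holds. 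The contractions with $\del/\del q^a_2$, $\del/\del P^a_1$ and $\del/\del P^a_2$ are computed in exactly the same way and recover the other three equations of \eqref{eq:Bridges}.

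The only obstacle to this otherwise mechanical argument is careful sign bookkeeping in the interior products and wedge pullbacks, in particular tracking the overall minus sign in front of the $dt_1$-sum in $\Omega^h$ and the antisymmetry $dt_2\wedge dt_1=-dt_1\wedge dt_2$ that flips the apparent sign of the ``off-diagonal'' terms (which is precisely what produces the mixture of $\del_1$ and $\del_2$ derivatives on the right-hand side of \eqref{eq:Bridges} and distinguishes the Bridges system from the De Donder--Weyl \cref{eq:DDW}). Since the coefficient of $dt_1\wedge dt_2$ in each of the four pulled-back contractions matches exactly one line of \eqref{eq:Bridges}, vanishing of all four is equivalent to $Z$ satisfying the Bridges system, completing the equivalence.
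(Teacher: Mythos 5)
Your proposal is correct and follows essentially the same route as the paper: write out $\Omega^h$ in the adapted coordinates, contract with the coordinate vector fields (discarding the $\del/\del t_i$ contractions as the chain rule for $H\circ Z$), and read off the four Bridges equations from the coefficients of $dt_1\wedge dt_2$. The sample computation for $\del/\del q^a_1$ matches the paper's term for term, including the signs.
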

\begin{proof}
    In the coordinates indicated above, we see that 
    \[\Omega^h = -dH\wedge dt_1\wedge dt_2 +\sum_a(dP^a_1\wedge dq^a_1+dP^a_2\wedge dq^a_2)\wedge dt_2-\sum_a(dP^a_1\wedge dq^a_2-dP^a_2\wedge dq^a_1)\wedge dt_1.\]
    Thus contracting $\Omega^h$ with different basis vectors gives
    \begin{align*}
        Z^*\left(\frac{\del}{\del q^a_1}\lrcorner\Omega^h\right) &= -\left(\frac{\del H}{\del q^a_1}+\del_1 P^a_1-\del_2 P^a_2\right)dt_1\wedge dt_2\\
        Z^*\left(\frac{\del}{\del q^a_2}\lrcorner \Omega^h\right)&= -\left(\frac{\del H}{\del q^a_2}+\del_1 P^a_2+\del_2 P^a_1\right)dt_1\wedge dt_2\\
        Z^*\left(\frac{\del}{\del P^a_1}\lrcorner\Omega^h\right)&= -\left(\frac{\del H}{\del P^a_1}-\del_1q^a_1-\del_2 q^a_2\right)dt_1\wedge dt_2\\
        Z^*\left(\frac{\del}{\del P^a_2}\lrcorner\Omega^h\right)&= -\left(\frac{\del H}{\del P^a_2}-\del_1q^a_2+\del_2 q^a_1\right)dt_1\wedge dt_2.
    \end{align*}
    Indeed, we see that $Z$ is a solution precisely when \cref{eq:Bridges} is satisfied. Just like before, the contraction of $\Omega^h$ with $\del_1$ and $\del_2$ doesn't give any additional information, but merely expresses the chain rule for the derivatives of $H\circ Z$.
\end{proof}

\begin{example}
   For $Y=\Sigma\times \C^n \to \Sigma$ the trivial bundle, we want to show that we recover the Laplace equation from \cref{eq:Bridges}, just like in \cite{from1to2}. We see that $\mu^C:\Lambda^C(Y)\to\wt{\Lambda^C(Y)}$ is trivial, since a global section is given by a volume form on $\Sigma$. Locally, the volume form is given by $\rho(t)dt_1\wedge dt_2$, for some positive real-valued function $\rho$ on $\Sigma$. Let $H$ be a locally defined function on $\wt{\Lambda^C(Y)}$, given by $(t_1,t_2,q_1^a,q_2^a,P_1^a,P_2^a)\mapsto \frac{1}{2}|P|^2+\lambda(t)V(q)$. This yields a local section $h:Z\mapsto Z-H(Z)dt_1\wedge dt_2$ of $\mu^C$. We show that this actually defines a global section. Note first that $V(q)$ is globally well-defined and so is $\lambda(t)dt_1\wedge dt_2$, since this is the local expression of the volume form. Thus $Z\mapsto \lambda(t)V(q)dt_1\wedge dt_2$ yields a global section. Let $w=\psi(t)$ be a different holomorphic coordinate on $\Sigma$. Then $dw_1\wedge dw_2=|\del_t\psi|^2dt_1\wedge dt_2$. Denote by $(w,q,S)$ the corresponding coordinates on $\wt{\Lambda^C(Y)}$. Then a straightforward calculation shows that $|S|^2=|P|^2|\del_t\psi|^{-2}$. Thus $|S|^2dw_1\wedge dw_2=|P|^2dt_1\wedge dt_2$ and we see that $h$ defines a global section of $\mu^C$. By filling in the third and fourth equalities of \cref{eq:Bridges} into the first and second, we find that solutions to this Hamiltonian section are given by 
   \begin{align*}
       -\frac{4}{\rho(t)}\del_t\del_{\bar{t}}q^a_j=\frac{\del V}{\del q^a_j}.
   \end{align*}
   Note that $\frac{4}{\rho(t)}\del_t\del_{\bar{t}}$ is the Laplace-Beltrami operator on $\Sigma$.
\end{example}

\section{Complex-regularized polysymplectic geometry}\label{sec:CRPS}
Recall that the main example in CRPS geometry was the cotangent bundle of a complex manifold $(Q,i')$. The tautological 1-form on $T^*Q$ is defined by $(\theta_1)_\nu=\nu(\pr_*\cdot)$ for $\nu\in T^*Q$ and $\pr:T^*Q\to Q$ the projection. Using the complex structure, we can naturally define a second 1-form given by $(\theta_2)_\nu=\nu(i'\circ\pr_*\cdot)$. Recall that the CRPS forms are defined by $\omega_1=d\theta_1$ and $\omega_2=-d\theta_2$. We want to show that when $\Sigma=\C$ and $Y=\C\times Q$ is the trivial bundle with the product complex structure, the CRMS construction above recovers these two forms on $T^*Q$.

Let $\pi:Y=\C\times Q\to \C$ be the trivial bundle. We denote by $j$ the standard complex structure on $\C$ and by $i=j\oplus i'$ the product complex structure on $Y$. Also denote by $\pi_Q:Y\to Q$ the canonical projection and by $\iota_t:Q\to Y$ the map $q\mapsto(t,q)$. As is proven in \cite{deLeon}, there is a diffeomorphism $\Psi:\Lambda^2_2(Y)\to \C\times\R\times \left(T^*Q\oplus T^*Q\right)$, given by \[\Psi(\lambda_{(t,q)})=(t,\lambda_{(t,q)}(\del_1,\del_2),\nu_1(\lambda)_q,\nu_2(\lambda)_q),\] where
\begin{align*}
    \nu_1(\lambda)_q(V) &= \lambda_{(t,q)}((\iota_t)_*V,\del_2)\\
    \nu_2(\lambda)_q(V)&=\lambda_{(t,q)}(\del_1,(\iota_t)_*V),
\end{align*}
for $V\in T_qQ$. The inverse is given by
\[\Psi^{-1}(t,p,\nu_1,\nu_2)=pdt_1\wedge dt_2+(\pi_Q)^*\nu_1\wedge dt_2 -(\pi_Q)^*\nu_2\wedge dt_1.\]

We can restrict $\Psi$ to $\Lambda^C(Y)\subseteq\Lambda^2_2(Y)$. For $\lambda\in\Lambda^2_2(Y)$, denote $\lambda^i=\lambda(i\cdot,i\cdot)$. Given $\lambda\in\Lambda^C(Y)$, we know $\Psi(\lambda)=\Psi(\lambda^i)$, which yields
\begin{align*}
    \nu_1(\lambda)(V)=\nu_1(\lambda^i)(V)=\lambda(\del_1,i(\iota_t)_*V)=\lambda(\del_1,(\iota_t)_*(i'V))=\nu_2(\lambda)(i'V),
\end{align*}
so that $\nu_1(\lambda)=\nu_2(\lambda)(i'\cdot)$ or equivalently $\nu_2(\lambda)=-\nu_1(\lambda)(i'\cdot)$. Vice versa, given $\nu_2=-\nu_1(i'\cdot)\in T^*Q$, we get
\begin{align*}
    \Psi^{-1}(t,p,\nu_1,\nu_2)(i\cdot,i\cdot) &= pdt_1\wedge dt_2+ (\pi_Q)^*(\nu_1(i'\cdot))\wedge dt_1+(\pi_Q)^*(\nu_2(i'\cdot))\wedge dt_2\\
    &=pdt_1\wedge dt_2 - (\pi_Q)^*\nu_2\wedge dt_1 + (\pi_Q)^*\nu_1\wedge dt_2\\
    &=\Psi^{-1}(t,p,\nu_1,\nu_2).
\end{align*}
We conclude that the image of $\Psi$ restricted to $\Lambda^C(Y)$ is $\C\times\R\times\{(\nu_1,\nu_2)\in T^*Q\oplus T^*Q\mid \nu_2=-\nu_1(i'\cdot)\}$. Note that the last term is diffeomorphic to $T^*Q$. We get a diffeomorphism $\Psi^C:\Lambda^C(Y)\to\C\times\R\times T^*Q$ given by
\begin{align*}
    \Psi^C(\lambda_{(t,q)})&=(t,\lambda_{(t,q)}(\del_1,\del_2),\nu_1(\lambda))\\&=(t,\lambda_{(t,q)}(\del_1,\del_2),\nu_2(\lambda)(i'\cdot))\\
    (\Psi^C)^{-1}(t,p,\nu) &= pdt_1\wedge dt_2+(\pi_Q)^*\nu\wedge dt_2+(\pi_Q)^*(\nu(i'\cdot))\wedge dt_1
\end{align*}
Using this diffeomorphism, we can transfer the 2-form $\Theta$ from $\Lambda^C(Y)$ to $\C\times\R\times T^*Q$. We also notice that $\Lambda^C(Y)\to\wt{\Lambda^C(Y)}\cong \C\times T^*Q$ is now the trivial bundle, so we can pullback $\Theta$ to $\wt{\Lambda^C(Y)}$ along the zero section. This yields a 2-form $\Theta^0$ on $\C\times T^*Q$. Using the formulas for $\Psi^C$ above, a straightforward but lengthy diagram chase shows that 
\begin{align*}
    \del_1\lrcorner(\del_2\lrcorner\Theta^0)&=0\\
    (\tau_t)^*(\del_1\lrcorner\Theta^0)&=-\theta_2\\
    (\tau_t)^*(\del_2\lrcorner\Theta^0)&=-\theta_1,
\end{align*}
where $\tau_t:T^*Q\to\C\times T^*Q$ is the map $\nu\mapsto(t,\nu)$. Thus, $d\Theta^0=\omega_1\wedge dt_2-\omega_2\wedge dt_1$ on $\C\times T^*Q$, which can be identified with the contraction of the polysymplectic form $\omega_1\otimes\del_1+\omega_2\otimes\del_2$ with the volume form $dt_1\wedge dt_2$ on $\C$. 

Note that the form $\Theta^0$ is precisely the 2-form that was integrated in \cite{from1to2} to define the action functional. Thus the action functional for sections of $\wt{\Lambda^C(Y)}\to\Sigma$ defined in \cref{eq:action} coincides with the action functional in the CRPS framework when $Y=\C\times Q$ is the trivial bundle over $\C$. Since $\Lambda^C(Y)\to\wt{\Lambda^C(Y)}$ is now a trivial bundle, Hamiltonian sections $h$ correspond globally to Hamiltonian functions $H$ and $\Omega^h=-dH\wedge dt_1\wedge dt_2+\omega_1\wedge dt_2-\omega_2\wedge dt_1$. One can compare this to the 2-form $-dH\wedge dt+\omega$ on the mapping torus of a symplectic manifold. 

\section{Complex-regularized multisymplectic geometry}\label{sec:CRMS}

Recall that we are mainly interested in the case where $Y=\Sigma\times Q$ is the trivial bundle, since we want to study maps $\Sigma\to Q$. In this case, we want to show some properties of $\wt{\Lambda^C(Y)}$ that will lead us to the general definition of a CRMS form on a bundle. We start by proving that $\wt{\Lambda^C(Y)}$ is a complex manifold.

Let $(\Sigma, j)$ a Riemann surface, $(Q,i')$ a complex manifold, and define $Y=\Sigma\times Q$ with the product complex structure. We cover $\Sigma$ with charts $\psi_\alpha:U_\alpha\to\psi_
\alpha(U_\alpha)\subseteq\C$, such that all transition maps $\psi_{\alpha\beta}:=\psi_\beta\circ\psi_\alpha^{-1}:\psi_\alpha(U_\alpha\cap U_\beta)\to\psi_\beta(U_\alpha\cap U_\beta)$ are biholomorphic. This means that if $t=t_1+i t_2$ denotes a local coordinate on $\psi_\alpha(U_\alpha)$, that $\bar\del_t\psi_{\alpha\beta}=0$, where $\bar\del_t=\frac{1}{2}(\del_{t_1}+i\del_{t_2})$ is the antiholomorphic derivative. Also, locally $j$ is given by $j\del_{t_1}=\del_{t_2}$.
Similarly, we cover $Q$ with charts $\vphi_\gamma:V_\gamma\to\vphi_\gamma(V_\gamma)\subseteq \C^n$, such that transition maps are biholomorphic. This induces product charts $\rho_{\alpha,\gamma}=\psi_\alpha\times \vphi_\gamma$ on $Y$. 

Pick an auxiliary metric $h$ on $\Sigma$ within the conformal class determined by $j$. This means that there are positive functions $\rho_\alpha$ such that on $\psi_\alpha(U_\alpha)$ the pull-back of the metric is given by 
\[h_\alpha:=(\psi_\alpha^{-1})^*h=\rho_\alpha(t)dt\otimes d\bar{t}=\rho_\alpha(t)(dt_1^2+dt_2^2).\]
This means that on overlapping charts $\rho_\alpha(t)=\rho_\beta(\psi_{\alpha\beta}(t))|\del_t\psi_{\alpha\beta}(t)|^2$. The induced volume form $dV$ on $\Sigma$ is locally given by $\rho_\alpha(t)dt_1\wedge dt_2=\frac{i}{2}\rho_\alpha(t)dt\wedge d\bar{t}$.

\begin{proposition}\label{prop:complexstructure}
    The manifold $\wt{\Lambda^C}(Y)$ carries the structure of a complex manifold, such that the projection $\sigma^C:\wt{\Lambda^C}(Y)\to\Sigma$ is holomorphic. 
\end{proposition}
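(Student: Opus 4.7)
The plan is to exhibit $\wt{\Lambda^C(Y)}$ as the total space of a holomorphic vector bundle over the complex manifold $Y$ and then to deduce both claims from standard facts about such bundles. My starting point is the observation that, intrinsically, the fiber of $\wt{\Lambda^C(Y)}$ over $y=(t,q)$ parametrizes the ``mixed'' components of elements $\lambda\in\Lambda^C(Y)_y$, which one can identify with real bilinear maps $L:T_qQ\to T_t^*\Sigma$. Unwinding the defining condition $\lambda(i\cdot,i\cdot)=\lambda$ in this picture translates exactly into the relation $L\circ i'=-j^*\circ L$, i.e.\ that $L$ is $\C$-linear with respect to the complex structure $i'$ on $T_qQ$ and the natural complex structure on $T_t^*\Sigma$ induced by $j$. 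Thus the fiber is canonically isomorphic to $\mathrm{Hom}_\C(T_qQ,T_t^*\Sigma)$, and as a bundle on $Y$ one obtains an identification $\wt{\Lambda^C(Y)}\cong(T_{Y/\Sigma})^*\otimes_\C\pi^*T^*\Sigma$, where $T_{Y/\Sigma}:=\ker d\pi$ is the relative tangent bundle.

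Since $Y=\Sigma\times Q$ is a holomorphic product, both $T_{Y/\Sigma}$ (pulled back from $TQ$) and $\pi^*T^*\Sigma$ are holomorphic vector bundles over $Y$, and hence so is their complex tensor product. The total space of any holomorphic vector bundle on a complex manifold carries a natural complex structure making the bundle projection holomorphic; this endows $\wt{\Lambda^C(Y)}$ with the structure of a complex manifold. The projection $\sigma^C$ factors as $\wt{\Lambda^C(Y)}\to Y\to\Sigma$, a composition of two holomorphic maps, and is therefore holomorphic.

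To make this concrete in the product charts $(\psi_\alpha,\vphi_\gamma)$ introduced before the proposition, I would read off the complex fiber coordinate as an appropriate complex combination $\tilde P^a$ of the real coordinates $(P^a_1,P^a_2)$ (for example $\tilde P^a=P^a_1-iP^a_2$, up to sign conventions) and verify directly that under a change of $\Sigma$-chart $w=\psi_{\alpha\beta}(t)$ one has $\tilde P^a\mapsto \tilde P^a/\del_t\psi_{\alpha\beta}$, while under a change of $Q$-chart one gets multiplication by the complex Jacobian. Both are manifestly holomorphic, and the transformation law for the absolute value is precisely the identity $|S|^2=|P|^2|\del_t\psi|^{-2}$ already observed in the example of \cref{sec:cotagent}. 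The main technical point is the derivation of this transformation law: without the condition $\lambda(i\cdot,i\cdot)=\lambda$ the two real components $(P_1^a,P_2^a)$ would mix under the real Jacobian of $\psi_{\alpha\beta}$ and fail to recombine into a holomorphic transformation, so it is exactly the complex-regularization constraint that produces the holomorphic structure on $\wt{\Lambda^C(Y)}$.
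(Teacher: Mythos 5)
Your overall strategy --- realizing $\wt{\Lambda^C(Y)}$ as the total space of a complex vector bundle over $Y$ and invoking the standard facts about holomorphic vector bundles --- is appealing, but the key identification contains a gap: the complex structure on the fiber forced by the constraint $\lambda(i\cdot,i\cdot)=\lambda$ is \emph{not} the one for which your bundle is naturally holomorphic. Writing $L(v)=\lambda(v,\cdot)$ (which descends to $T_t\Sigma$ since $\lambda$ kills pairs of vertical vectors), the constraint reads $L\circ i'=-(\,\cdot\circ j)\circ L$. The complex structure $\eta\mapsto -\eta\circ j$ on $T_t^*\Sigma$ is the one whose $+i$-eigenspace is the \emph{anti}holomorphic cotangent space; so $L$ is naturally a $\C$-linear map $T_qQ\to T^{*0,1}_t\Sigma$, equivalently a $\C$-antilinear map into the holomorphic cotangent space. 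The conjugate line bundle $T^{*0,1}\Sigma$ does not carry a natural holomorphic structure: its natural local frames have transition functions involving $\overline{\del_t\psi_{\alpha\beta}}$. Concretely, if you compute the transition law for the unnormalized fiber coordinate you find
\[
S_1^a-iS_2^a=\frac{1}{\del_{q^a}\vphi_{\gamma\delta}\,\overline{\del_t\psi_{\alpha\beta}}}\,(P_1^a-iP_2^a),
\]
which is antiholomorphic in $t$, not the $\tilde P^a\mapsto\tilde P^a/\del_t\psi_{\alpha\beta}$ you assert. Your sanity check $|S|^2=|P|^2|\del_t\psi|^{-2}$ from \cref{sec:cotagent} cannot detect this, since the modulus is blind to complex conjugation. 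So the claim that $\wt{\Lambda^C(Y)}\cong (T_{Y/\Sigma})^*\otimes_\C\pi^*T^*\Sigma$ as a holomorphic vector bundle, and hence the appeal to ``the total space of a holomorphic vector bundle is a complex manifold,'' does not go through as stated.

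The proposition is nevertheless true, and your approach can be repaired, but it needs one more ingredient --- which is exactly why the paper fixes an auxiliary metric $h$ in the conformal class of $j$ before stating the result. The positive functions $\rho_\alpha$ satisfy $\rho_\alpha=(\rho_\beta\circ\psi_{\alpha\beta})\,|\del_t\psi_{\alpha\beta}|^2$, and building them into the charts multiplies the transition function above by $|\del_t\psi_{\alpha\beta}|^2$, converting $1/\overline{\del_t\psi_{\alpha\beta}}$ into $\del_t\psi_{\alpha\beta}$ and producing the genuinely holomorphic cocycle $\del_t\psi_{\alpha\beta}/\del_{q^a}\vphi_{\gamma\delta}$ that the paper computes. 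In bundle language, the metric implements a smooth isomorphism $T^{*0,1}\Sigma\cong T^{1,0}\Sigma$, so the correct holomorphic model is $(T_{Y/\Sigma})^*\otimes_\C\pi^*T\Sigma$ rather than $(T_{Y/\Sigma})^*\otimes_\C\pi^*T^*\Sigma$, and some such choice (a Hermitian metric on $K_\Sigma$, say) is genuinely needed --- there is no canonical metric-free isomorphism. Once you make this correction, your argument and the paper's proof become essentially the same computation of transition functions, with the abstract bundle description as a useful gloss.
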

\begin{proof}
The charts of $Y$ induce charts $\wt{\Lambda^C}(U_\alpha\times V_\gamma)\to\wt{\Lambda^C}(\rho_{\alpha,\gamma}(U_\alpha\times V_\gamma))\subseteq \C\times\C^n\times (\C^n)^*$ for $\wt{\Lambda^C}(Y)$ whose elements are given by
\[\sum_a\left((P_1^adq_1^a+P_2^adq_2^a)\wedge\rho_\alpha(t)dt_2-(P_1^adq_2^a-P_2^adq_1^a)\wedge\rho_\alpha(t)dt_1\right)\]
with respect to local coordinates $t=t_1+it_2$ on $\Sigma$ and $q^a=q^a_1+iq^a_2$ on $Q$. To determine whether $\wt{\Lambda^C}(Y)$ is a complex manifold, we need to calculate the transition functions. 

Let $w=w_1+iw_2$ denote a local coordinate on $\Sigma$ in the chart $U_\beta$ and $r^a=r^a_1+ir^a_2$ in the chart $V_\delta$ of $Q$. We denote elements of $\wt{\Lambda^C}(U_\beta\times V_\delta)$ by 
\[\sum_a\left((S_1^adr_1^a+S_2^adr_2^a)\wedge\rho_\beta(w)dw_2-(S_1^adr_2^a-S_2^adr_1^a)\wedge\rho_\beta(w)dw_1\right).\]


A straightforward computation, using the fact that $\psi_{\alpha\beta}$ and $\vphi_{\gamma\delta}$ are holomorphic, yields that
\[S_1^a-iS_2^a = \frac{\del_t\psi_{\alpha\beta}}{\del_{q^a}\vphi_{\gamma\delta}}(P_1^a-iP_2^a).\]
As this is a holomorphic map, we see that the transition function is a holomorphic map between subsets of $\C\times\C^n\times(\C^n)^*$, where we recall that $i^*=-i$. This proves that $\wt{\Lambda^C}(Y)$ is a complex manifold. 
\end{proof}

For the bundle $\sigma^C:\wt{\Lambda^C(Y)}\to\Sigma$, we denote $V^C=\ker d\sigma^C$ the vertical bundle.
\begin{proposition}
    For any complex-regularized Hamiltonian section $h$, the resulting CRMS form $\Omega^h$ on $\wt{\Lambda^C(Y)}$ satisfies
    \begin{enumerate}[label=\roman*)]
        \item $d\Omega^h=0$,
        \item $\Omega^h(v_1,v_2,v_3)=0$ for any $v_1,v_2,v_3\in V^C_\lambda$ and $\lambda\in\wt{\Lambda^C(Y)}$,
        \item for any $\xi\in T_\lambda\wt{\Lambda^C(Y)}$ with $d\sigma^C(\xi)\neq 0$, we have that $\Omega^h(\xi,\cdot,\cdot)$ restricts to a non-degenerate bilinear form on $V^C_\lambda$,
        \item for any $\xi\in T_\lambda\wt{\Lambda^C(Y)}$ and $v_1,v_2\in V^C_\lambda$ we have that 
        \[\Omega^h(I\xi,v_1,v_2)=-\Omega^h(\xi,v_1,Iv_2),\]
        where $I$ is the complex structure on $\wt{\Lambda^C(Y)}$.
    \end{enumerate}
\end{proposition}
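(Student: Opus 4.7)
The plan is to verify each of the four properties from the local coordinate description of $\Omega^h$ obtained in \cref{sec:cotagent}, combined with the holomorphic coordinates on $\wt{\Lambda^C(Y)}$ constructed in the proof of \cref{prop:complexstructure}. Property (i) is immediate: since the tautological form $\Theta$ is a global 2-form on $\Lambda^C(Y)$ with $\Omega = d\Theta$, pulling back by the section $h$ gives $\Omega^h = h^*\Omega = d(h^*\Theta) = d\Theta^h$, whence $d\Omega^h = 0$. For (ii) I observe that every summand of the local expression
\[\Omega^h = -dH\wedge dt_1\wedge dt_2 + \sum_a(dP_1^a\wedge dq_1^a+dP_2^a\wedge dq_2^a)\wedge dt_2 - \sum_a(dP_1^a\wedge dq_2^a-dP_2^a\wedge dq_1^a)\wedge dt_1\]
carries at least one factor of $dt_1$ or $dt_2$, and these annihilate the vertical bundle $V^C$.

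For (iii) I first note that contracting $\Omega^h$ with a vertical vector still leaves a $dt$-factor in every surviving term, so $\xi\lrcorner\Omega^h|_{V^C\times V^C}$ depends only on $d\sigma^C(\xi)$. Writing $d\sigma^C(\xi) = a_1\del_{t_1}+a_2\del_{t_2}$ with $(a_1,a_2)\neq (0,0)$, a short calculation yields
\[\xi\lrcorner\Omega^h\big|_{V^C\times V^C} = a_1\omega_1 + a_2\omega_2,\]
where $\omega_1 := \sum_a(dP_2^a\wedge dq_1^a - dP_1^a\wedge dq_2^a)$ and $\omega_2 := \sum_a(dP_1^a\wedge dq_1^a + dP_2^a\wedge dq_2^a)$. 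Decomposing by the index $a$, the resulting bilinear form pairs $dP_1^a, dP_2^a$ against two 1-forms in $\textup{span}(dq_1^a,dq_2^a)$ whose coefficient matrix has determinant $a_1^2+a_2^2\neq 0$, so the restriction is non-degenerate on each block and hence on all of $V^C_\lambda$.

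The heart of the proof is (iv). Using the holomorphic fibre coordinates $q^a=q_1^a+iq_2^a$ and $P^a=P_1^a-iP_2^a$ from the proof of \cref{prop:complexstructure}, I will verify the key identity
\[\omega_2 - i\omega_1 = \sum_a dP^a\wedge dq^a,\]
which exhibits $\omega_2-i\omega_1$ as a $(2,0)$-form on the complex vector space $V^C_\lambda$. Any $(2,0)$-form $\alpha$ satisfies $\alpha(v_1, Iv_2)=i\,\alpha(v_1,v_2)$, so separating real and imaginary parts gives $\omega_2(v_1,v_2) = -\omega_1(v_1, Iv_2)$ and $\omega_1(v_1,v_2) = \omega_2(v_1, Iv_2)$. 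Combining these with $d\sigma^C(I\xi) = -a_2\del_{t_1}+a_1\del_{t_2}$ (which holds because $\sigma^C$ is holomorphic by \cref{prop:complexstructure}), property (iv) reduces to a direct substitution into the formula from (iii). The conceptual step is identifying the holomorphic combination that turns $\omega_2-i\omega_1$ into a form of type $(2,0)$; once this is in place the remaining verification is purely algebraic.
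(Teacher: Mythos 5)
Your proof is correct and follows essentially the same route as the paper: a verification in the local coordinates of \cref{prop:complexstructure}, decomposing $\xi\lrcorner\Omega^h|_{V^C}$ into two fiber $2$-forms weighted by the components of $d\sigma^C(\xi)$. Your identification of $\omega_2-i\omega_1=\sum_a dP^a\wedge dq^a$ as a $(2,0)$-form is just a repackaging of the paper's identity $\omega_2=-\omega_1(\cdot,I\cdot)$, and your block-determinant argument for (iii) is equivalent to the paper's observation that $\xi_{t_2}\Id+\xi_{t_1}I$ is invertible.
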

\begin{proof}
    The first property follows trivially from $\Omega^h=d\Theta^h$. The other properties can be easily checked in local coordinates. Note that in the local coordinates introduced in \cref{prop:complexstructure}, $V^C_\lambda$ is spanned by $\{\del_{q_1^a},\del_{q_2^a}\del_{P_1^a},\del_{P_2^a}\}$, where $I\del_{q_1^a}=\del_{q_2^a}$ and $I\del_{P_1^a}=-\del_{P_2^a}$. Take $\xi\in T_\lambda\wt{\Lambda^C(Y)}$ given by 
    \[\xi=\frac{\xi_{t_1}}{\rho_\alpha(t)}\del_{t_1}+\frac{\xi_{t_2}}{\rho_\alpha(t)}\del_{t_2}+\sum_a\left(\xi_{q_1^a}\del_{q_1^a}+\xi_{q_2^a}\del_{q_2^a}+\xi_{P_1^a}\del_{P_1^a}+\xi_{P_2^a}\del_{P_2^a}\right),\]
    and define $\omega^\xi:=\Omega^h(\xi,\cdot,\cdot)|_{V^C_\lambda}$ as a 2-form on $V^C_\lambda$.
    Then 
    \begin{align*}
        \omega^\xi=\xi_{t_2}\sum_a\left(dP_1^a\wedge dq_1^a+dP_2^a\wedge dq_2^a\right)-\xi_{t_1}\sum_a\left(dP_1^a\wedge dq_2^a-dP_2^a\wedge dq_1^a\right).
    \end{align*}
    First of all, if $\xi\in V^C_\lambda$, then $\xi_{t_1}=\xi_{t_2}=0$, which proves ii). We write $\omega^\xi=\xi_{t_2}\omega_1-\xi_{t_1}\omega_2$, for 2-forms $\omega_1$ and $\omega_2$ on $V^C_\lambda$. Then an easy check shows $\omega_2=-\omega_1(\cdot,I\cdot)$. Combined with $\omega^{I\xi}=\xi_{t_1}\omega_1+\xi_{t_2}\omega_2$, this gives iv). Also, this implies 
    \[\omega^\xi=\omega_1(\cdot,(\xi_{t_2}\Id+\xi_{t_1}I)\cdot),\]
    and since $\xi_{t_2}\Id+\xi_{t_1}I$ is a non-degenerate matrix whenever $(\xi_{t_1},\xi_{t_2})\neq(0,0)$, we see that indeed $\omega^\xi$ is non-degenerate on $V^C_\lambda$ whenever $d\sigma^C(\xi)\neq 0$.
\end{proof}

\begin{remark}
    We note that $\sigma^C:\wt{\Lambda^C(Y)}\to\Sigma$ is in general not a trivial bundle, even when $\pi:Y\to\Sigma$ is trivial. For example, let $Q=\C$. Let 
    \begin{align*}
        \Phi:\wt{\Lambda^2_2(Y)}&\to Q\times(T^*\Sigma)^{\oplus2}\\
        (t,q,\lambda)&\mapsto \left(q,t,\lambda\left(\del_{q_1},\cdot\right),\lambda\left(\del_{q_2},\cdot\right)\right)
    \end{align*}
    with inverse $(q,t,\eta_1,\eta_2)\mapsto(t,q,\eta_1\wedge dq_1+\eta_2\wedge dq_2)$. This is a bundle isomorphism. It restricts to an isomorphism $\wt{\Lambda^C(Y)}\overset{\sim}{\to} Q\times T^*\Sigma$, where $T^*\Sigma$ sits in $(T^*\Sigma)^{\oplus2}$ by $(t,\eta)\mapsto (t,\eta\circ j,\eta)$. So $\wt{\Lambda^C(Y)}\to\Sigma$ is isomorphic to $\C\times T^*\Sigma\to\Sigma$, which is a non-trivial bundle if $\Sigma\neq\T^2$.
\end{remark}

\section{A general definition}\label{sec:generaldef}
In this paragraph, we will define the concept of \emph{complex-regularized multisymplectic} (CRMS) bundles, inspired by the discussion in the previous paragraphs. The spaces $\wt{\Lambda^C(Y)}$ for $Y=\Sigma\times Q$ will form the main examples of CRMS bundles and we will prove a Darboux-type theorem. 

For a fiber bundle $\sigma:W\to\Sigma$, we denote $V=\ker d\sigma\subseteq TW$ the vertical bundle and $V_w$ the fiber over $w\in W$. 
\begin{definition}\label{def:CRMSbundle}
    Let $\sigma:W\to\Sigma$ be a fiber bundle, where $(W,I)$ is a complex manifold, $(\Sigma,j)$ is a Riemann surface, and $\sigma$ is holomorphic. A 3-form $\Omega$ on $W$ is called \emph{CRMS} if it satisfies the following conditions.
    \begin{enumerate}[label=\roman*)]
        \item $d\Omega=0$;
        \item\label{1hor} $\Omega$ is 1-horizontal, meaning that $\Omega(v_1,v_2,v_3)=0$ for any $v_1,v_2,v_3\in V_w$, for all $w$;
        \item\label{fiberwisenondeg} $\Omega$ is fiberwise non-degenerate, meaning that  for any $\xi\in T_wW$ with $d\sigma(\xi)\neq 0$, we have that $\Omega(\xi,\cdot,\cdot)$ is a non-degenerate bilinear form when restricted to $V_w$;
        \item\label{Icomp} $\Omega$ is $I$-compatible, in the sense that 
        \[\Omega(I\xi, v_1,v_2)=-\Omega(\xi,v_1,I v_2)\]
        for all $\xi\in T_wW$ and $v_1,v_2\in V_w$.
    \end{enumerate}
\end{definition}
Note that CRMS forms are in particular multisymplectic, in the sense that they are closed and 1-nondegenerate (see \cite{deLeon}).

\begin{remark}\label{rem:CRMSbundle}
    \begin{enumerate}[label=\alph*.]
        \item\label{holomfiberbundle} We do not require $W\to\Sigma$ to be a holomorphic fiber bundle. That is, even locally, the complex structure on $W$ does not  have to be the product of the complex structures on $\Sigma$ and the fiber. Instead, in a local trivialization $S\times F$, where $S\subseteq \Sigma$ and $F$ the fiber, $I$ looks like 
        \begin{align*}
            I = \begin{pmatrix}
                j & 0\\
                A & I'
            \end{pmatrix},
        \end{align*}
        for a complex structure $I'$ on $F$ and a map $A:TS\to TF$ such that $A\circ j = I'\circ A$. 
        \item\label{vertindep} For $\xi,\xi'\in T_wW$ with $d\sigma(\xi)=d\sigma(\xi')$, we have $\Omega(\xi,\cdot,\cdot)|_{V_w}=\Omega(\xi',\cdot,\cdot)|_{V_w}$, by condition \ref{1hor}. Thus, the $I$-compatibility condition does not restrict the part of $I$ denoted by $A$ in remark \ref{holomfiberbundle}
    \end{enumerate}
\end{remark}

We will now proceed to formulate a Darboux-type theorem for CRMS bundles. Note that we cannot expect a full Darboux theorem. The definition states that $\Omega$ is fiberwise non-degenerate, but does not restrict the action of $\Omega$ on only one vertical vector. For example, the forms $\Omega^h$ on $\wt{\Lambda^C(Y)}$ defined in \cref{sec:cotagent}, locally look like
\[\Omega^h=\omega_1\wedge dt_2-\omega_2\wedge dt_1-dH\wedge dt_1\wedge dt_2,\]
where $\omega_1$ and $\omega_2$ are the standard CRPS forms on the fibers. Even locally, we cannot get rid of the $dH\wedge dt_1\wedge dt_2$ term. What is true however, is that the action of $\Omega$ on at least two vertical vectors is locally standard. This is the best we can hope for. One should compare this situation to the mapping tori from symplectic geometry. They are described by bundles $M\to S^1$, equipped with a 2-form $\omega_M=\omega-dH\wedge dt$, where $\omega$ is symplectic on the fibers. Also here, we cannot hope to get local coordinates in which $\omega_M$ is standard, but we can however find local coordinates in which the action of $\omega_M$ on the fibers is standard. 

\begin{theorem}\label{thm:Darboux}
    Let $\sigma:W\to \Sigma$ as in \cref{def:CRMSbundle} with CRMS form $\Omega\in\Omega^3(W)$. Then around every point in $W$ we can find an open $\mathbb{U}\subseteq W$ and diffeomorphisms $\psi:\mathbb{U}\to S\times U$ and $\chi=(t_1,t_2): \mathbb{S}=\sigma(\mathbb{U})\to S$, where $S\subseteq \C$ and $U\subseteq \C^{2n}$ open, such that
    \begin{itemize}
        \item $\pr\circ\psi=\chi\circ\sigma$, where $\pr:\C\times\C^{2n}\to\C$ the projection,
        \item $(\psi^{-1})^*\Omega=\Omega_{std}-dH_t\wedge dt_1\wedge dt_2$ for some function $H:S\times U\to \R$,
        \item $\chi$ is holomorphic,
        \item $d\psi\circ I|_V = i\circ d\psi|_V$, for $i$ the standard complex structure on $\C\times\C^{2n}$.
    \end{itemize}
    Here, $\Omega_{std}=\omega_1\wedge dt_2-\omega_2\wedge dt_1$, where $\omega_1$ and $\omega_2$ are the standard CRPS forms on $\C^{2n}$. 
\end{theorem}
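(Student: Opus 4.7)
The plan is to reduce to local holomorphic coordinates on $W$ adapted to $\sigma$, to apply a parameterized holomorphic Darboux theorem to the fiberwise structure, and finally to absorb the residual $dt_1\wedge dt_2$ component into a Hamiltonian using $d\Omega=0$.

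Since $\sigma:W\to\Sigma$ is a holomorphic submersion, near a point $w_0\in W$ I would choose local holomorphic coordinates $(z,w^1,\ldots,w^{2n})$ on $W$ such that $\sigma$ corresponds to projection to $z=t_1+it_2$. This realises two of the four bullets of the theorem automatically: $\chi$ is holomorphic, and $I|_V$ equals the standard complex structure on $\C^{2n}$. The 1-horizontality condition then forces the decomposition
\[\Omega=\omega_1\wedge dt_2-\omega_2\wedge dt_1+\eta\wedge dt_1\wedge dt_2,\]
where $\omega_1,\omega_2$ are fiberwise 2-forms and $\eta$ is a fiberwise 1-form, all depending smoothly on $(t,w)$. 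Fiberwise non-degeneracy and the $I$-compatibility condition translate to non-degeneracy of $\omega_1$ on each fiber together with $\omega_2=-\omega_1(\cdot,I\cdot)$, while $d\Omega=0$ yields the three identities $d_V\omega_1=0$, $d_V\omega_2=0$, and $\del_{t_1}\omega_1+\del_{t_2}\omega_2+d_V\eta=0$ (with $d_V$ the fiberwise exterior derivative).

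Combining $\omega_2=-\omega_1(\cdot,I\cdot)$ with antisymmetry gives $\omega_1(I\cdot,I\cdot)=-\omega_1$ on each fiber, so $\omega_1$ is of type $(2,0)+(0,2)$; together with $d_V\omega_1=0$ this makes its $(2,0)$-part $\omega_1^{2,0}(t)$ a closed non-degenerate \emph{holomorphic} 2-form on an open set of $\C^{2n}$, depending smoothly on $t$. The main technical step is a parameterized holomorphic Darboux theorem: after a linear holomorphic symplectic normalization at $w_0$, I would run Moser's trick with $t$ as a smooth external parameter, using the holomorphic Poincar\'e lemma on a ball to obtain a primitive $\beta_t$ of $\omega_1^{2,0}(t)-\omega_{\textup{std}}^{2,0}$ that is holomorphic in the fiber variable and depends smoothly on $t$, and integrate the resulting time-dependent holomorphic Moser vector field to produce a smooth family of biholomorphisms $\phi_t$ with $\phi_t^*\omega_1^{2,0}(t)=\omega_{\textup{std}}^{2,0}$.

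Setting $\psi(t,w):=(t,\phi_t(w))$, the map commutes with $\sigma$ and is holomorphic in the fiber direction, so the two ``free'' bullets on $\chi$ and $I|_V$ survive. Under the pushforward $\omega_1$ becomes $\omega_1^{\textup{std}}$, and by the compatibility relation $\omega_2$ simultaneously becomes $\omega_2^{\textup{std}}$, so $\Omega$ now reads $\Omega_{\textup{std}}+\eta'\wedge dt_1\wedge dt_2$. Since $\omega_1^{\textup{std}}$ and $\omega_2^{\textup{std}}$ no longer depend on $t$, the third identity collapses to $d_V\eta'=0$, and the fiberwise Poincar\'e lemma produces a smooth function $H$ with $\eta'=-d_V H$; since the $dt_i$-components of $dH$ wedge away, $\eta'\wedge dt_1\wedge dt_2=-dH\wedge dt_1\wedge dt_2$. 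The main obstacle is the parameterized holomorphic Moser step, where both the primitive and the resulting flow must be chosen holomorphic in the fiber variable and simultaneously smooth in the base parameter $t$.
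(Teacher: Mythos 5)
Your proposal is correct, but it follows a genuinely different route from the paper's in the two key steps. For the coordinate normalization, you invoke the holomorphic rank theorem for the holomorphic submersion $\sigma$ to get honest holomorphic product coordinates at once, whereas the paper starts from a smooth local trivialization and applies Newlander--Nirenberg only to the induced fiberwise structure $\hat{J}_t$, obtaining coordinates that are merely fiberwise holomorphic (consistent with \cref{rem:CRMSbundle}, the triangular block $A$ is allowed to survive there); both suffice for the stated conclusion, and yours is the more economical observation. For the normalization of the form, you repackage the fiberwise data as the holomorphic symplectic form $\omega_1^{2,0}=\tfrac12(\omega_1+i\omega_2)$ --- your verification that $\omega_2=-\omega_1(\cdot,I\cdot)$ forces $\omega_1$ to be anti-invariant, hence of type $(2,0)+(0,2)$ with closed, nondegenerate, fiberwise-holomorphic $(2,0)$-part, is sound --- and then run a $t$-parameterized holomorphic Moser/Darboux argument in the spirit of the holomorphic symplectic Darboux theorem. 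The paper instead runs a real Moser deformation directly on the 3-form $\Omega_r$, building the primitive $\alpha$ from a holomorphic contraction and the vertical vector field $\V_r$ from fiberwise nondegeneracy together with the $I$-compatibility identity. What your route buys is that vertical complex-linearity of the normalizing diffeomorphism comes for free, since the Moser vector field is fiberwise holomorphic, so you do not need the analogue of \cref{lem:verticalholom}; what it costs is that the burden shifts to the parameterized holomorphic Poincar\'e lemma and the smooth $t$-dependence of the holomorphic Moser flow, which you correctly flag as the main technical point and which does go through on a ball by radial integration. The final step, reading off $d_V\eta'=0$ from closedness once $\omega_1,\omega_2$ are $t$-independent and writing $\eta'=-d_VH_t$, coincides with the paper's treatment of the residual $\nu_t\wedge dt_1\wedge dt_2$ term.
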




Before we prove \cref{thm:Darboux}, we start with a linear version of the Darboux theorem.
\begin{lemma}\label{lem:linDarboux}
    Given is a short exact sequence of complex vector spaces
    \[0\to V\to (W,I)\overset{\sigma}{\to} (T,j)\to 0,\]
    where $\dim_\C T=1$. Let $\Omega$ an alternating multilinear 3-form on $W$ that satisfies
    \begin{enumerate}[label=\roman*)]
    \item $\Omega(v_1,v_2,v_3)=0$ for all $v_1,v_2,v_3\in V$,
        \item $\Omega(\xi,\cdot,\cdot)$ is non-degenerate on $V$ for all $\xi\in W$ with $\sigma(\xi)\neq 0$,
        \item $\Omega(I\xi,v_1,v_2)=-\Omega(\xi,v_1,I v_2)$ for all $\xi\in W$ and $v_1,v_2\in V$.
    \end{enumerate}
    Then there exists a real basis $\{e_1,e_2,a_1^k,a_2^k,b_1^k,b_2^k\}_{k=1}^n$ of $W$, such that 
    \begin{itemize}
        \item $T=\text{span}_\R \{\sigma(e_1),\sigma(e_2)\}$ and $V=\text{span}_\R\{a_1^k,a_2^k,b_1^k,b_2^k\}_{k=1}^n$,
        \item $Ie_1=e_2$, $Ia_1^k=a_2^k$, $Ib_1^k=-b_2^k$,
        \item for $\{\epsilon_1,\epsilon_2,\alpha_1^k,\alpha_2^k,\beta_1^k,\beta_2^k\}$ the dual basis, we have
        \[\Omega = \sum_{k=1}^n \left((\beta_1^k\wedge \alpha_1^k+\beta_2^k\wedge \alpha_2^k)\wedge \epsilon_2-(\beta_1^k\wedge\alpha_2^k-\beta_2^k\wedge \alpha_1^k)\wedge\epsilon_1\right)+\nu\wedge\epsilon_1\wedge\epsilon_2,\]
        for some 1-form $\nu$ on $V$. 
    \end{itemize}
\end{lemma}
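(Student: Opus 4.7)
Proof plan.

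The idea is to split $\Omega$ along the horizontal--vertical decomposition of $W$ and reduce to a linear CRPS Darboux problem on the fiber $V$. First I would lift a basis of $T$: pick $e_1\in W$ with $\sigma(e_1)\neq 0$ and set $e_2:=Ie_1$. Since $\sigma$ intertwines $I$ with $j$ and $\dim_\C T=1$, the pair $\{\sigma(e_1),\sigma(e_2)\}$ is a real basis of $T$, giving the splitting $W=V\oplus\mathrm{span}_\R\{e_1,e_2\}$. Let $\epsilon_1,\epsilon_2\in W^*$ be dual to $e_1,e_2$ and annihilate $V$. By hypothesis i), $\Omega$ has no purely vertical component, so the splitting of $W$ forces a decomposition
\[
\Omega=\omega_1\wedge\epsilon_2-\omega_2\wedge\epsilon_1+\nu\wedge\epsilon_1\wedge\epsilon_2,
\]
where $\omega_1:=\Omega(e_2,\cdot,\cdot)|_V$ and $\omega_2:=-\Omega(e_1,\cdot,\cdot)|_V$ are 2-forms on $V$ and $\nu:=\Omega(\cdot,e_1,e_2)|_V$ is a 1-form on $V$. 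Hypothesis ii) makes $\omega_1,\omega_2$ nondegenerate, and hypothesis iii) applied with $\xi=e_1$ gives
\[
\omega_1(v_1,v_2)=\Omega(Ie_1,v_1,v_2)=-\Omega(e_1,v_1,Iv_2)=\omega_2(v_1,Iv_2),
\]
so the CRPS relation $\omega_2=-\omega_1(\cdot,I\cdot)$ holds on $(V,I)$.

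The main step is then a linear CRPS Darboux argument on $V$. I would combine the two real symplectic forms into the complex-valued form $\omega_\C:=\omega_1+i\omega_2$ on $(V,I)$. Using $\omega_2=-\omega_1(\cdot,I\cdot)$, a direct calculation shows that $\omega_\C(I\cdot,\cdot)=\omega_\C(\cdot,I\cdot)=i\omega_\C(\cdot,\cdot)$; together with alternation and nondegeneracy (inherited from $\omega_1$), this makes $\omega_\C$ a complex symplectic form on the complex vector space $(V,I)$ of complex dimension $2n$. A standard complex Darboux theorem then produces $U^1,\dots,U^n,V^1,\dots,V^n\in V$ forming an $I$-complex basis with $\omega_\C(U^k,V^l)=\delta_{kl}$ and all other pairings zero.

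To conclude, I would define the real basis of $V$ by $b_1^k:=U^k$, $b_2^k:=-IU^k$, $a_1^k:=V^k$, $a_2^k:=IV^k$. Then $Ia_1^k=a_2^k$ and $Ib_1^k=-b_2^k$ by construction. Taking real and imaginary parts of $\omega_\C(U^k,V^l)=\delta_{kl}$ along with the $\C$-bilinear consequences $\omega_\C(U^k,IV^l)=i\delta_{kl}$, $\omega_\C(IU^k,IV^l)=-\delta_{kl}$, and evaluating against the dual basis $\{\alpha_i^k,\beta_i^k\}$ of $V^*$, produces the prescribed expressions for $\omega_1$ and $\omega_2$. Substituting into the decomposition of $\Omega$ above yields the claimed formula.

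The main obstacle I anticipate is the final bookkeeping: verifying that the real and imaginary parts of the complex Darboux pairings organize themselves into precisely the asymmetric CRPS expressions $\sum_k(\beta_1^k\wedge\alpha_1^k+\beta_2^k\wedge\alpha_2^k)$ for $\omega_1$ and $\sum_k(\beta_1^k\wedge\alpha_2^k-\beta_2^k\wedge\alpha_1^k)$ for $\omega_2$, with all signs correct; the appearance of $-I$ in the definition of $b_2^k$ (rather than $+I$) is crucial here. Everything else is either a direct consequence of hypotheses i)--iv) or a standard application of complex linear Darboux.
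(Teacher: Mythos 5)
Your proposal is correct and follows essentially the same route as the paper: lift $e_1$ with $e_2=Ie_1$, read off $\omega_1=\Omega(e_2,\cdot,\cdot)|_V$ and $\omega_2=-\Omega(e_1,\cdot,\cdot)|_V$, use hypothesis iii) to get the CRPS relation $\omega_2=-\omega_1(\cdot,I\cdot)$, and then invoke a linear CRPS Darboux theorem on $(V,I)$, with the residual $\nu\wedge\epsilon_1\wedge\epsilon_2$ term forced by $\dim_\R T=2$. The only difference is that the paper cites the linear CRPS Darboux theorem as known (from the CRPS paper), whereas you supply a proof of it by assembling $\omega_\C=\omega_1+i\omega_2$ into a complex symplectic form and applying the complex linear Darboux theorem; your sign bookkeeping for $a_i^k,b_i^k$ checks out, so this is a valid, self-contained substitute for that step.
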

\begin{proof}
    Take a complex-linear splitting $\rho:T\to W$ of the short exact sequence and a real basis $\{\wt{e_1},\wt{e_2}=j\wt{e_1}\}$ of $T$. Define $e_1=\rho(\wt{e_1})$ and $e_2=\rho(\wt{e_2})$ and $\omega_1=\Omega(e_2,\cdot,\cdot)|_V$ and $\omega_2=-\Omega(e_1,\cdot,\cdot)|_V$. Then
    \begin{align*}
        \omega_1(\cdot,I\cdot) &= \Omega(e_2,\cdot,I\cdot)|_V = -\Omega(Ie_2,\cdot,\cdot)|_V = -\omega_2.
    \end{align*}
    So $\omega_1$ and $\omega_2$ define a linear CRPS structure on $(V,I)$. By the Darboux theorem for linear CRPS forms, there exists a basis $\{a_1^k,a_2^k,b_1^k,b_2^k\}_{k=1}^n$ of $V$ such that $Ia_1^k=a_2^k$, $Ib_1^k=-b_2^k$ and 
    \begin{align*}
        \omega_1 = \sum_{k=1}^n \left(\beta_1^k\wedge \alpha_1^k+\beta_2^k\wedge \alpha_2^k\right), && \omega_2 = \sum_{k=1}^n\left(\beta_1^k\wedge\alpha_2^k-\beta_2^k\wedge \alpha_1^k\right).
    \end{align*}
    This fixes the action of $\Omega$ on at least 2 vertical vectors. The only other term that can appear is of the form $\nu\wedge \epsilon_1\wedge\epsilon_2$, by the dimension of $T$. 
\end{proof}

\section{Proof of the Darboux theorem}\label{sec:Darboux}
The proof is inspired by the proof of the Darboux theorem for holomorphic symplectic manifolds from \cite{wagner2023pseudo}. We start by taking a local trivialization of $\sigma$, that we shrink in order to get coordinate charts. In other words, we have opens $\mathbb{U}\subseteq W$ and $\mathbb{S}=\sigma(\mathbb{U})\subseteq\Sigma$ and a commutative diagram
\[\begin{tikzcd}
	{\mathbb{U}} & {\hat{S}\times\hat{U}} \\
	{\mathbb{S}} & {\hat{S}}
	\arrow["{\hat{\psi}}", from=1-1, to=1-2]
	\arrow["\sigma"', from=1-1, to=2-1]
	\arrow["{\hat{\pr}}", from=1-2, to=2-2]
	\arrow["{\hat{\chi}}"', from=2-1, to=2-2]
\end{tikzcd}\]
where $\hat{S}\subseteq\R^2$ and $\hat{U}\subseteq\R^{4n}$. Here, both $\hat{\psi}$ and $\hat{\chi}$ are diffeomorphisms. We put almost complex structures $\hat{I}$ on $\hat{S}\times\hat{U}$ and $\hat{j}$ on $\hat{S}$ by 
\begin{align*}
    \hat{I}&=d\hat{\psi}\circ I\circ d\hat{\psi}^{-1}\\
    \hat{j}&=d\hat{\chi}\circ j\circ d\hat{\chi}^{-1}.
\end{align*}
As $\hat{\pr}$ is holomorphic with respect to these almost complex structures, we can write for $(t,u)\in \hat{S}\times\hat{U}$
\begin{align*}
    \hat{I}_{(t,u)}=\begin{pmatrix}
        \hat{j}_{t} & 0\\ \hat{A}_{(t,u)} & \hat{J}_{(t,u)}
    \end{pmatrix},
\end{align*}
for some complex matrix $\hat{J}_{(t,u)}$ on $T_u\hat{U}$ and a map $\hat{A}_{(t,u)}:T_t\hat{S}\to T_u\hat{U}$ satisfying $\hat{A}_{(t,u)}\hat{j}_t+\hat{J}_{(t,u)}\hat{A}_{(t,u)}=0$. In other words, we get a $t$-dependent almost complex structure $\hat{J}_t$ on $\hat{U}$. 

Now, since $I$ and $j$ are integrable, so are $\hat{I}$ and $\hat{j}$. For vector fields $Y_1$ and $Y_2$ on $\hat{U}$ we have 
\begin{align*}
    0=N_{\hat{I}}((0, Y_1),(0,Y_2))=(0,N_{\hat{J}_t}(Y_1,Y_2)).
\end{align*}
So $\hat{J}_t$ is integrable for every $t$. Therefore, after possibly shrinking $\hat{U}$ and $\hat{S}$, there exists a biholomorphism $\chi':\hat{S}\to S\subseteq\C$ and a smooth family of biholomorphisms $\nu_t':\hat{U}\to U\subseteq\C^{2n}$, where $\C$ and $\C^{2n}$ are equipped with the standard complex structure. We can combine them into a map $\psi':\hat{S}\times\hat{U}\to S\times U$ defined by $\psi'(t,u)=(\chi'(t),\nu_t'(u))$, yielding the following commutative diagram
\[\begin{tikzcd}
	{\mathbb{U}} & {\hat{S}\times\hat{U}} & {S\times U} \\
	{\mathbb{S}} & {\hat{S}} & S
	\arrow["{\hat{\psi}}", from=1-1, to=1-2]
	\arrow["\sigma"', shift left=4, from=1-1, to=2-1]
	\arrow["{\psi'}", from=1-2, to=1-3]
	\arrow["{\hat{\pr}}", from=1-2, to=2-2]
	\arrow["\pr", from=1-3, to=2-3]
	\arrow["{\hat{\chi}}"', from=2-1, to=2-2]
	\arrow["{\chi'}"', from=2-2, to=2-3]
\end{tikzcd}\]
where the top horizontal arrows are diffeomorphisms and the bottom horizontal arrows are biholomorphisms. Note that $d\psi'\circ \hat{I}|_{\hat{V}}=i\circ d\psi'|_{\hat{V}}$, for $\hat{V}=\ker d\hat{\pr}$.

We define $\Omega_0=((\psi'\circ\hat{\psi})^{-1})^*\Omega$ on $S\times U$. By \cref{lem:linDarboux}, we may assume that the action of $\Omega_0|_{w'}$ on at least two vertical vectors is standard, where $w'=\psi'\circ\hat{\psi}(w)$. First, we need to check that $\Omega_0$ is CRMS. This follows from the following lemma.
\begin{lemma}
    Let $\sigma:W\to\Sigma$ and $\sigma':W'\to\Sigma'$ two fiber bundles, where $(W,I)$ and $(W',I')$ are complex manifolds, $(\Sigma,j)$ and $(\Sigma',j')$ are Riemann surfaces, and $\sigma$ and $\sigma'$ are holomorphic. Let $\psi:W'\to W$ a diffeomorphism covering a biholomorphism $\chi:\Sigma'\to\Sigma$ such that $d\psi\circ I'|_V=I\circ d\psi|_V$. If $\Omega$ is a CRMS form on $W$ and $\Omega'=\psi^*\Omega$, then $\Omega'$ is CRMS on $W'$.
\end{lemma}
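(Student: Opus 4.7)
The plan is to check the four defining properties of a CRMS form in \cref{def:CRMSbundle} for $\Omega'=\psi^*\Omega$ one by one. Since $\psi$ covers $\chi$, we have $\sigma\circ\psi=\chi\circ\sigma'$, and so $d\sigma\circ d\psi=d\chi\circ d\sigma'$; as $d\chi$ is a linear isomorphism (because $\chi$ is a biholomorphism) this in particular sends $V'_{w'}=\ker d\sigma'_{w'}$ isomorphically onto $V_{\psi(w')}=\ker d\sigma_{\psi(w')}$. Closedness is immediate: $d\Omega'=d\psi^*\Omega=\psi^*d\Omega=0$. The 1-horizontal property follows since $d\psi$ carries three vertical vectors at $w'$ to three vertical vectors at $\psi(w')$, where $\Omega$ vanishes. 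For fiberwise non-degeneracy, if $\xi'\in T_{w'}W'$ satisfies $d\sigma'(\xi')\neq 0$, then $d\sigma(d\psi\,\xi')=d\chi(d\sigma'(\xi'))\neq 0$, so $\Omega(d\psi\,\xi',\cdot,\cdot)$ is non-degenerate on $V_{\psi(w')}$; pulling back along the linear isomorphism $d\psi|_{V'_{w'}}\colon V'_{w'}\to V_{\psi(w')}$ gives non-degeneracy of $\Omega'(\xi',\cdot,\cdot)$ on $V'_{w'}$.

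The only step needing care is $I$-compatibility, because our hypothesis $d\psi\circ I'|_{V'}=I\circ d\psi|_{V'}$ is only assumed on the vertical bundle, whereas the compatibility condition involves $I'\xi'$ for arbitrary $\xi'\in T_{w'}W'$. The plan here is to exploit \cref{rem:CRMSbundle}\ref{vertindep}: when $\Omega$ is paired with two vertical vectors, the first slot is only sensitive to the image of the argument under $d\sigma$. Using holomorphicity of $\sigma$ and of $\chi$, compute
\begin{align*}
d\sigma\bigl(d\psi(I'\xi')\bigr)&=d\chi\bigl(d\sigma'(I'\xi')\bigr)=d\chi\bigl(j'\,d\sigma'(\xi')\bigr)=j\,d\chi\bigl(d\sigma'(\xi')\bigr)\\
&=j\,d\sigma(d\psi\,\xi')=d\sigma\bigl(I(d\psi\,\xi')\bigr),
\end{align*}
so $d\psi(I'\xi')$ and $I(d\psi\,\xi')$ have the same image under $d\sigma$, hence differ by a vertical vector at $\psi(w')$.

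With this in hand the verification becomes formal. For $v_1',v_2'\in V'_{w'}$, by \cref{rem:CRMSbundle}\ref{vertindep} and then $I$-compatibility of $\Omega$ followed by $d\psi\circ I'|_{V'}=I\circ d\psi|_{V'}$, we get
\begin{align*}
\Omega'(I'\xi',v_1',v_2')&=\Omega\bigl(d\psi(I'\xi'),d\psi\,v_1',d\psi\,v_2'\bigr)=\Omega\bigl(I(d\psi\,\xi'),d\psi\,v_1',d\psi\,v_2'\bigr)\\
&=-\Omega\bigl(d\psi\,\xi',d\psi\,v_1',I(d\psi\,v_2')\bigr)=-\Omega\bigl(d\psi\,\xi',d\psi\,v_1',d\psi(I'v_2')\bigr)\\
&=-\Omega'(\xi',v_1',I'v_2').
\end{align*}
This closes the argument. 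The main conceptual point, and the only nontrivial one, is the reduction of the $I$-compatibility check to the vertical part of the complex structure via the 1-horizontal property; all other properties are routine consequences of the pullback formalism together with the fact that $\psi$ covers a biholomorphism.
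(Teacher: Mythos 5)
Your proposal is correct and follows essentially the same route as the paper's proof: the first three conditions are handled by the pullback formalism, and the $I$-compatibility is reduced to the vertical hypothesis on $d\psi$ via the computation $d\sigma(d\psi(I'\xi'))=d\sigma(I\,d\psi(\xi'))$ combined with \cref{rem:CRMSbundle}.\ref{vertindep}. The only difference is that you spell out the fiberwise non-degeneracy step in slightly more detail, which the paper leaves implicit.
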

\begin{proof}
    The closedness condition follows trivially. Also, note that for $v\in V'$ we have $d\sigma(d\psi(v))=d\chi(d\sigma'(v))=0$, so that $d\psi$ maps $V'$ into $V$. This shows that $\Omega'$ satisfies conditions \ref{1hor} and \ref{fiberwisenondeg} of \cref{def:CRMSbundle}. Finally, we must check that $\Omega'$ is compatible with $I'$ in the sense of condition \ref{Icomp}. 

    First, note that for $\xi\in T_{w'}W$
    \begin{align*}
        d\sigma(d\psi(I'\xi))=d\chi\circ d\sigma'(I'\xi) = j\circ d\chi\circ d\sigma'(\xi)=j\circ d\sigma\circ d\psi(\xi) = d\sigma(I d\psi(\xi)).
    \end{align*}
    So by \cref{rem:CRMSbundle}.\ref{vertindep} it follows that 
    \[\Omega(d\psi(I'\xi),\cdot,\cdot)|_V=\Omega(Id\psi(\xi),\cdot,\cdot)|_V.\]
    Therefore, for $v_1,v_2\in V'$,
    \begin{align*}
        \Omega'(I'\xi,v_1,v_2) &= \Omega(d\psi(I'\xi),d\psi(v_1),d\psi(v_2))\\
        &=\Omega(Id\psi(\xi),d\psi(v_1),d\psi(v_2))\\
        &=-\Omega(d\psi(\xi),d\psi(v_1),Id\psi(v_2))\\
        &=-\Omega(d\psi(\xi),d\psi(v_1),d\psi(I'v_2))\\
        &=-\Omega'(\xi,v_1,I'v_2).
    \end{align*}
    Here, the fourth equality follows from the assumption that $d\psi$ is complex-linear when applied to vertical vectors. 
\end{proof}

We define $\Omega_1$ to be the standard CRMS form in coordinates on $S\times U$ and $\Omega_r=\Omega_0+r(\Omega_1-\Omega_0)=:\Omega_0+r\eta$ for $r\in[0,1]$. Since the actions of $\Omega_0|_{w'}$ and $\Omega_1|_{w'}$ on at least two vertical vectors coincide, we may assume that $\Omega_r$ is fiberwise non-degenerate for any $r\in[0,1]$ in the sense of condition \ref{fiberwisenondeg} of \cref{def:CRMSbundle}, after possibly shrinking $S$ and $U$ again. Thus, $\Omega_r$ is CRMS on $S\times U$ for any $r\in[0,1]$.

After possibly shrinking again, we may assume $U$ is holomorphically contractible. That is, there exists a family of holomorphic maps $\rho_s':U\to U$ with $\rho_1'=\Id_U$, $\rho_0(u)=w'$ for all $u\in U$ and $\rho'_s(w')=w'$ for all $s$.  
Let $X_s'$ be the associated vector field on $U$ and $X_s=(0,X_s')$ the vertical lift to $S\times U$. Also, let $\rho_s=\Id_S\times\rho_s':S\times U\to S\times U$. We define a 2-form on $S\times U$ by
\[\alpha = \int_0^1\rho_s^*(\iota(X_s)\eta)\,ds.\]
Then 
\[d\alpha = \int_0^1\frac{d}{ds}(\rho_s^*\eta)\,ds=\eta-\rho_0^*\eta,\]
so that $d\alpha(\xi,v_1,v_2)=\eta(\xi,v_1,v_2)$ for any vertical vectors $v_1,v_2$, as $d\rho_0(v)=0$ for any vertical $v$. Also
\begin{align*}
    \alpha(i\xi,v) &= \int_0^1\eta(X_s,d\rho_s(i\xi),d\rho_s(v))\,ds\\
    &=\int_0^1 \eta(X_s,id\rho_s(\xi),d\rho_s(v))\,ds\\
    &=-\int_0^1\eta(id\rho_s(\xi),X_s,d\rho_s(v))\,ds\\
    &=\int_0^1\eta(d\rho_s(\xi),X_s,id\rho_s(v))\,ds\\
    &=-\int_0^1\eta(X_s,d\rho_s(\xi),d\rho_s(iv))\,ds\\
    &=-\alpha(\xi,iv).
\end{align*}
Moreover, $\alpha$ is 1-horizontal, meaning that $\alpha(v_1,v_2)=0$, for any vertical vectors $v_1,v_2$. Without loss of generality, we may assume $\alpha|_{w'}=0$. 

Now, since $\Omega_r$ is fiberwise non-degenerate, we may define a vertical vector field $\V_r$ on $S\times U$, by 
\[\Omega_r(\del_1,\V_r,v)=\alpha(\del_1,v)\]
for any vertical $v$. Then
\begin{align*}
    \Omega_r(\del_2,\V_r,v) &= -\Omega_r(\del_1,\V_r,iv)\\
    &=-\alpha(\del_1,iv)\\
    &=\alpha(\del_2,v),
\end{align*}
since $\del_2=i\del_1$. So then $\Omega_r(\xi,\V_r,v)=\alpha(\xi,v)$ for any vector field $\xi$ on $S\times U$, as both sides of the equation only depend on $d\pr(\xi)$.

We want to show that $\L_{\V_r}\Omega_r(\xi,v_1,v_2)=-d\alpha(\xi,v_1,v_2)$ for any $\xi$ and any vertical $v_1$ and $v_2$. We write out both sides. The left-hand side equals
\begin{align*}
    \L_{\V_r}\Omega_r(\xi,v_1,v_2) &= d(\iota(\V_r)\Omega_r)(\xi,v_1,v_2)\\
    &= \xi\cdot \Omega_r(\V_r,v_1,v_2)-v_1\cdot\Omega_r(\V_r,\xi,v_2)+v_2\cdot\Omega_r(\V_r,\xi,v_1)\\
    &\phantom{bla}-\Omega_r(\V_r,[\xi,v_1],v_2)+\Omega_r(\V_r,[\xi,v_2],v_1)-\Omega_r(\V_r,[v_1,v_2],\xi)\\
    &=v_1\cdot\alpha(\xi,v_2)-v_2\cdot\alpha(\xi,v_1)\\
    &\phantom{bla}+\alpha([\xi,v_1],v_2)-\alpha([\xi,v_2],v_1)-\alpha(\xi,[v_1,v_2]).
\end{align*}
Similarly, minus the right-hand side equals
\begin{align*}
    d\alpha(\xi,v_1,v_2) &= \xi\cdot \alpha(v_1,v_2) -v_1\cdot\alpha(\xi,v_2)+v_2\cdot\alpha(\xi,v_1)\\
    &\phantom{bla} -\alpha([\xi,v_1],v_2)+\alpha([\xi,v_2],v_1)-\alpha([v_1,v_2],\xi)\\
    &=-v_1\cdot\alpha(\xi,v_2)+v_2\cdot\alpha(\xi,v_1)\\
    &\phantom{bla} -\alpha([\xi,v_1],v_2)+\alpha([\xi,v_2],v_1)+\alpha(\xi,[v_1,v_2]).
\end{align*}
Here, the last equality follows from the fact that $\alpha$ is 1-horizontal. So indeed, $\L_{\V_r}\Omega_r(\xi,v_1v_2)=-d\alpha(\xi,v_1,v_2)=-\eta(\xi,v_1,v_2)=-\frac{d}{dr}\Omega_r(\xi,v_1,v_2)$. 

Let $\vphi_r$ be the flow of $\V_r$. Note that $\pr\circ\vphi_r=\pr$ for all $r$, as $\V_r$ is vertical. Thus, $\vphi_r$ preserves the vertical bundle. We get that
\begin{align*}
    \frac{d}{dr}\left(\vphi_r^*\Omega_r(\xi,v_1,v_2)\right) = \vphi_r^*\left(\L_{\V_r}\Omega_r+\frac{d}{dr}\Omega_r\right)(\xi,v_1,v_2)=0.
\end{align*}
Therefore, $\vphi_1^*\Omega_1(\xi,v_1,v_2)=\Omega_0(\xi,v_1,v_2)$ for all vertical $v_1,v_2$. The following commuting diagram gives an overview of the maps we introduced.
\[\begin{tikzcd}
	{\mathbb{U}} & {\hat{S}\times\hat{U}} & {S\times U} & {S\times U} \\
	{\mathbb{S}} & {\hat{S}} & S & S
	\arrow["{\hat{\psi}}", from=1-1, to=1-2]
	\arrow["\sigma"', shift left=4, from=1-1, to=2-1]
	\arrow["{\psi'}", from=1-2, to=1-3]
	\arrow["{\hat{\pr}}", from=1-2, to=2-2]
	\arrow["{\vphi_1}", from=1-3, to=1-4]
	\arrow["\pr", from=1-3, to=2-3]
	\arrow["\pr", from=1-4, to=2-4]
	\arrow["{\hat{\chi}}"', from=2-1, to=2-2]
	\arrow["{\chi'}"', from=2-2, to=2-3]
	\arrow[from=2-3, to=2-4]
\end{tikzcd}\]

Let $\psi=\vphi_1\circ\psi'\circ\hat{\psi}$ and $\chi=\chi'\circ\hat{\chi}$. Then, $(\psi^{-1})^*\Omega(\xi,v_1,v_2)=\Omega_1(\xi,v_1,v_2)$ for all vertical $v_1,v_2$. Define \[\nu(\xi)=(\psi^{-1})^*\Omega(\xi,\del_1,\del_2)-\Omega_1(\xi,\del_1,\del_2)=(\psi^{-1})^*\Omega(\xi,\del_1,\del_2),\]
since $\Omega_1$ vanishes on two horizontal vectors. A priori, $\nu$ is a 1-form on $S\times U$. However, since $S$ is 2-dimensional, $\nu(\xi)$ only depends on the vertical part of $\xi$. Therefore, we may view $\nu_t$ as a $t$-dependent 1-form on $U$. We see that \[(\psi^{-1})^*\Omega=\Omega_1+\nu_t\wedge dt_1\wedge dt_2.\]
Moreover, $\nu_t$ is closed for each $t$, so that $\nu_t=-dH_t$, for some function $H_t:U\to\R$. 

The only thing left to prove is that $d\psi$ is complex-linear, when applied to vertical vectors. Note that we already proved that this was true for $\psi'\circ\hat{\psi}$. So we only have to show it for $\vphi_1$. The statement follows from the following \cref{lem:verticalholom} below. \qed

\begin{lemma}\label{lem:verticalholom}
    Let $\sigma:W\to\Sigma$ and $\sigma':W'\to\Sigma$ be two CRMS bundles over the same base, with CRMS forms $\Omega$ and $\Omega'$ respectively. If $\vphi:W\to W'$ is a diffeomorphism covering the identity on $\Sigma$, such that $\vphi^*\Omega'(\xi,v_1,v_2)=\Omega(\xi,v_1,v_2)$ for all $\xi\in T_wW$ and $v_1,v_2\in V_w$, then $d\vphi(Iv)=I'd\vphi(v)$ for all $v\in V_w$. 
\end{lemma}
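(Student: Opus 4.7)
The plan is to use fiberwise non-degeneracy of $\Omega'$ to pin down the discrepancy $I'd\vphi(v_2)-d\vphi(Iv_2)$ as zero, by showing that $\Omega'$ pairs it trivially against all vertical vectors. I would fix $\xi\in T_wW$ with $d\sigma(\xi)\neq 0$ and arbitrary vertical vectors $v_1,v_2\in V_w$, and derive a single identity of the form
\[\Omega'\bigl(d\vphi(\xi),d\vphi(v_1),I'd\vphi(v_2)-d\vphi(Iv_2)\bigr)=0\]
by computing $\Omega(I\xi,v_1,v_2)$ in two ways: once by applying the $I$-compatibility of $\Omega$ first and then pushing through $\vphi$ via the hypothesis, and once by pushing through $\vphi$ first and then applying the $I'$-compatibility of $\Omega'$.

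To make these two computations agree, I would invoke \cref{rem:CRMSbundle}.\ref{vertindep}: the left slot of $\Omega'$, when the other two slots are vertical, depends only on the $\sigma'$-projection of the input. Since $\sigma'$ is holomorphic and $\vphi$ covers the identity, the vectors $d\vphi(I\xi)$ and $I'd\vphi(\xi)$ have the same image in $T\Sigma$, hence induce the same bilinear form on $V'_{\vphi(w)}$, which allows the substitution needed to reconcile the two computations. This step is the main obstacle I anticipate, because there is no direct hypothesis relating $d\vphi\circ I$ and $I'\circ d\vphi$ on horizontal vectors---indeed, establishing the lemma is precisely what allows one to conclude such a relation on vertical vectors---so one has to circumvent this by working entirely with $\sigma'$-projections of the first slot.

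Once the displayed identity is established, the conclusion is immediate. The vector $I'd\vphi(v_2)-d\vphi(Iv_2)$ lies in $V'_{\vphi(w)}$, since $I$ and $I'$ preserve the respective vertical subbundles (as $\sigma,\sigma'$ are holomorphic) and $d\vphi$ maps $V$ to $V'$ because $\vphi$ covers the identity. Since $d\vphi|_{V_w}\colon V_w\to V'_{\vphi(w)}$ is a linear isomorphism, $d\vphi(v_1)$ ranges over all of $V'_{\vphi(w)}$ as $v_1$ ranges over $V_w$; and $d\sigma'(d\vphi(\xi))=d\sigma(\xi)\neq 0$, so by condition \ref{fiberwisenondeg} of \cref{def:CRMSbundle}, $\Omega'(d\vphi(\xi),\cdot,\cdot)$ restricts to a non-degenerate bilinear form on $V'_{\vphi(w)}$. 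The identity then forces $I'd\vphi(v_2)-d\vphi(Iv_2)=0$, as desired.
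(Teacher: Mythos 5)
Your proposal is correct and follows essentially the same route as the paper's proof: compute $\Omega(I\xi,v_1,v_2)$ two ways, use \cref{rem:CRMSbundle}.\ref{vertindep} together with holomorphicity of $\sigma,\sigma'$ and the fact that $\vphi$ covers the identity to identify the forms induced by $d\vphi(I\xi)$ and $I'd\vphi(\xi)$ on the vertical space, then conclude by fiberwise non-degeneracy. The supporting observations you make explicitly (that $Iv_2$ is vertical and that $d\vphi(v_1)$ exhausts $V'_{\vphi(w)}$) are exactly what the paper uses implicitly.
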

\begin{proof}
    Let $\xi\in T_wW$ be any vector with $d\sigma(\xi)\neq 0$. Note that $d\sigma'\circ d\vphi(\xi)=d\sigma(\xi)\neq 0$.  For $v_1,v_2\in V_w$ we calculate
    \begin{align*}
        \Omega'(d\vphi(\xi),d\vphi(v_1),d\vphi(Iv_2)) &= \Omega(\xi,v_1,Iv_2)\\
        &= -\Omega(I\xi,v_1,v_2)\\
        &=-\Omega'(d\vphi(I\xi),d\vphi(v_1),d\vphi(v_2)).
    \end{align*}
    Now, note that $d\sigma'(d\vphi(I\xi))=d\sigma(I\xi)=j d\sigma(\xi)=j d\sigma'\circ d\vphi(\xi)=d\sigma'(I'd\vphi(\xi))$. So again by \cref{rem:CRMSbundle}.\ref{vertindep} we have 
    \begin{align*}
        -\Omega'(d\vphi(I\xi),d\vphi(v_1),d\vphi(v_2)) &= -\Omega'(I'd\vphi(\xi),d\vphi(v_1),d\vphi(v_2))\\
        &=\Omega'(d\vphi(\xi),d\vphi(v_1),I'd\vphi(v_2)).
    \end{align*}
    Thus $\Omega'(d\vphi(\xi),d\vphi(v_1),d\vphi(Iv_2))=\Omega'(d\vphi(\xi),d\vphi(v_1),I'd\vphi(v_2))$. Now, as $d\sigma'(d\vphi(\xi))\neq 0$, by the fiberwise non-degeneracy of $\Omega'$ and the bijectivity of $d\vphi$, we have that $d\vphi(Iv_2)=I'd\vphi(v_2)$ for all $v_2\in V_w$. 
\end{proof}

\section{Floer sections}\label{sec:Floer}
Now that we have defined the class of manifolds for which the Hamiltonian equations locally look like the ones that we find in CRPS geometry, we want to setup the stage for defining Floer theory. In order to do that we have to define Floer curves as the gradient of the action functional with respect to a suitable metric. Just like in symplectic geometry, the metric has to somehow relate the multisymplectic form, to almost complex structures. For a CRMS form $\Omega$ on $W$ and $\xi\in T_wW$, we define $\omega^\xi:=\Omega(\xi,\cdot,\cdot)|_{V_w}$. By definition of CRMS forms, this is a linear symplectic form on $V_w$, whenever $\xi$ is not purely vertical. 


\begin{proposition}\label{prop:globalmetric}
    Let $\sigma:(W,I,\Omega)\to(\Sigma,j)$ be a CRMS bundle and fix a metric $h$ on $\Sigma$ within the conformal class of $j$. There exists an inner product $g$ on the fibers of V and a bundle map $\J:\sigma^*T\Sigma\backslash\{0\}\to \text{End}(V)$, such that for all $\xi\in T_wW$ with $\rho=d\sigma(\xi)\neq0$, we have
    \begin{enumerate}[label=(\roman*)]
        \item\label{it:compl} $\J(\rho)^2=-|\rho|^2_h\Id_{V_w}$,
        \item\label{it:metric} $\omega^\xi = g(\cdot,\J(\rho)\cdot)$,
        \item\label{it:compatible} $\J(j\rho)=I \J(\rho)$.
    \end{enumerate}
\end{proposition}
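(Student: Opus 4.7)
The plan is to reduce the statement fiberwise to the linear CRPS setting and then invoke the structure theorem from \cite{from1to2}. As a preliminary observation, Remark \ref{rem:CRMSbundle}.\ref{vertindep} implies that $\omega^\xi$ depends only on $\rho := d\sigma(\xi)$, so I can view the assignment $\rho \mapsto \omega^\rho$ as a linear map $T_{\sigma(w)}\Sigma \to \Lambda^2 V_w^*$. Fiberwise non-degeneracy of $\Omega$ forces $\omega^\rho$ to be a linear symplectic form on $V_w$ whenever $\rho \neq 0$, and since $\sigma$ is holomorphic we have $d\sigma(I\xi) = j\rho$ whenever $d\sigma(\xi) = \rho$, so the $I$-compatibility \ref{Icomp} translates into the key identity
\[ \omega^{j\rho}(\cdot,\cdot) = -\omega^\rho(\cdot, I\cdot) \]
on $V_w$.

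Working over a sufficiently small open in $\Sigma$, I would next pick an $h$-orthonormal local frame $(e_1, e_2 = je_1)$ of $T\Sigma$ and set $\omega_1 := \omega^{e_2}$ and $\omega_2 := -\omega^{e_1}$. The identity above then gives exactly $\omega_2 = -\omega_1(\cdot, I\cdot)$, so that on each fiber $V_w$ above the chart, the data $(I|_{V_w}, \omega_1, \omega_2)$ form a linear CRPS space. Applying the CRPS structure theorem from \cite{from1to2} fiberwise yields an inner product $g_w$ on $V_w$ and anti-commuting almost complex structures $J_w, K_w$ with $\omega_1 = g_w(\cdot, J_w\cdot)$, $\omega_2 = g_w(\cdot, K_w\cdot)$ and $I = J_w K_w$. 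Writing $\rho = \rho_1 e_1 + \rho_2 e_2$ and using linearity yields
\[ \omega^\rho = \rho_2 \omega_1 - \rho_1 \omega_2 = g_w\bigl(\cdot, (\rho_2 J_w - \rho_1 K_w)\cdot\bigr), \]
which motivates the definition $\J(\rho) := \rho_2 J_w - \rho_1 K_w$. Property \ref{it:metric} then holds by construction; \ref{it:compl} follows from $J_w^2 = K_w^2 = -\Id$ together with $J_w K_w + K_w J_w = 0$, giving $\J(\rho)^2 = -(\rho_1^2+\rho_2^2)\Id = -|\rho|_h^2\Id$; and \ref{it:compatible} follows from $IJ_w = K_w$ and $IK_w = -J_w$ (direct consequences of $I = J_wK_w$ and anticommutativity) combined with the coordinate identity $j\rho = -\rho_2 e_1 + \rho_1 e_2$.

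The main obstacle will be to verify that $g$ and $\J$ are intrinsic, i.e., independent of the chosen orthonormal frame $(e_1, e_2)$, so that the local constructions patch to globally defined objects. For this I would argue that an orthonormal rotation of $(e_1, e_2)$ by angle $\theta$ transforms $(\omega_1, \omega_2)$ by the same planar rotation, and the associated CRPS triple rotates as $(J_w', K_w') = (\cos\theta\, J_w + \sin\theta\, K_w,\, -\sin\theta\, J_w + \cos\theta\, K_w)$, while the metric $g_w = \omega_1(\cdot, J_w\cdot)$ and the relation $I = J_w K_w$ are left invariant. Simultaneously the coefficients $\rho_i$ transform by the inverse rotation, so that $\rho_2' J_w' - \rho_1' K_w' = \rho_2 J_w - \rho_1 K_w$, and hence $\J(\rho)$ is unchanged. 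Thus $g$ and $\J$ are canonically defined globally on $V$ and on $\sigma^*T\Sigma \setminus \{0\}$ respectively, and satisfy the three stated properties throughout.
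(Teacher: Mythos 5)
Your proposal is correct and is essentially the route the paper takes: the paper's own proof describes itself as an adaptation of the CRPS structure theorem of \cite{from1to2}, which is exactly the result you invoke fiberwise. The one genuine organizational difference is how direction-independence of the metric is handled. You fix an orthonormal frame $(e_1,e_2=je_1)$, apply the structure theorem to $(\omega_1,\omega_2)=(\omega^{e_2},-\omega^{e_1})$, and then check equivariance under frame rotation; the paper instead runs the polar decomposition directly for each direction $\rho$, writing $\omega^\xi=|\rho|_h(\cdot,A_\rho\cdot)$ and showing $B_\rho=\sqrt{A_\rho A_\rho^T}$ satisfies $B_{j\rho}^2=A_\rho II^TA_\rho^T=B_\rho^2$, which kills the frame dependence in one line and makes the global patching immediate. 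Your rotation argument does go through (the rotated pair $(\omega_1',\omega_2')$ is again CRPS and the polar-decomposition metric is unchanged), but note that it implicitly relies on the structure theorem being the canonical polar-decomposition construction relative to a globally chosen $I$-compatible reference inner product on $V$ --- you should say a word about that reference metric, as the paper does. Two small slips: the compatibility convention gives $g_w=-\omega_1(\cdot,J_w\cdot)$, not $\omega_1(\cdot,J_w\cdot)$; and the relation $I=J_wK_w$ (equivalently $K_w=IJ_w$, as in the paper's remark) should be checked to be part of what \cite{from1to2} actually delivers, or derived from the anticommutation of $J_w$ with $I$, rather than assumed.
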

\begin{remark}
    In the CRPS framework, the analogue of $\J$ would be $Jdt_2-Kdt_1$. Conditions \ref{it:compl} and\ref{it:metric} then translate to the fact that any linear combination of unit norm of $J$ and $K$ is an almost complex structure compatible with the corresponding linear combination of $\omega_1$ and $\omega_2$. Condition \ref{it:compatible} says that $K=IJ$.
\end{remark}

\begin{proof}[Proof of \cref{prop:globalmetric}]
    The proof is an adaptation of proposition 7.1 from \cite{from1to2}. Just like there, the essence of the proposition lies in the local version. The global statement then follows from the fact that after an initial choice of inner product on V, the rest of the construction is canonical.

    Thus, we fix $w\in W$ and an inner product $(\cdot,\cdot)$ on the vector space $V_w$ compatible with $I$. All transposes will be taken with respect to this inner product. Also fix $\xi\in T_wW$ such that $\rho=d\sigma(\xi)\in T_{\sigma(w)}\Sigma$ is non-zero. 
    By non-degeneracy of $\omega^\xi$ we get a skew-symmetric linear isomorphism $A_\rho:V_w\to V_w$, such that $\omega^\xi=|\rho|_h(\cdot,A_\rho\cdot)$, where $A$ only depends on $\rho$ by \cref{rem:CRMSbundle}.\ref{vertindep} In fact, $A_{c\rho}=A_\rho$ for any $c>0$ and $A_{-\rho}=-A_\rho$. Note that \[\omega^\xi(I v_1,Iv_2)=-\omega^{I\xi}(Iv_1,v_2)=\omega^{I\xi}(v_2,Iv_1)=\omega^\xi(v_2,v_1)=-\omega^\xi(v_1,v_2),\] so that $A_\rho$ anticommutes with $I$. 

    Let $B_\rho:=\sqrt{A_\rho A_\rho^T}$ and $J_\rho:=|\rho|_hB_\rho^{-1}A_\rho$. From the skew-symmetry of $A_\rho$, it follows that $J_\rho$ is skew-symmetric and $J_\rho J_\rho^T=|\rho|_h^2\Id_{V_w}$, so that indeed $J_\rho$ is an almost complex structure on $V_w$ for $\rho$ of unit length. 
    We want to show that $B_\rho$ is independent of $\rho$. 


    Note that $\omega^{I\xi}=-\omega^\xi(\cdot,I\cdot)$. The left-hand side is equal to $|\rho|_h(\cdot,A_{j\rho}\cdot)$ and the right-hand side to $-|\rho|_h(\cdot,A_\rho I\cdot)$. This yields that $A_{j\rho}=-A_\rho I$. From this we conclude that 
    \[B_{j\rho}^2=A_{j\rho}A_{j\rho}^T=A_\rho II^TA_\rho^T=A_\rho A_\rho^T=B_\rho^2.\]
    Also $B_\rho=B_{-\rho}$. Since $\rho$ and $j\rho$ span $T_{\sigma(w)}\Sigma$, we get that $B_\rho$ is independent of $\rho$ (as long as $\rho$ is non-zero). Thus we may define the inner product $g=(\cdot,B\cdot)$ independent of $\rho$. Indeed, we see that $\omega^\xi=g(\cdot, J_\rho\cdot)$.


    Finally, note that
    \[J_{j\rho}=|j\rho|_hB^{-1}A_{j\rho}=-|\rho|_hB^{-1}A_\rho I=|\rho|_hIB^{-1}A_\rho=IJ_\rho.\]

    We conclude that the map $\J:\rho\mapsto J_\rho$ satisfies the conditions of the theorem.   
\end{proof}

There is a canonically defined action functional on sections of a CRMS bundle $\sigma:W\to\Sigma$. If we assume $\Omega=d\Theta$, then it is given by
\[\A(Z)=\int_\Sigma Z^*\Theta,\]
for a section $Z:\Sigma\to W$. Note that the tangent space to $\Gamma(\sigma)$ at $Z$ can be seen as $\Gamma(Z^*V)$. We compute for $Z\in\Gamma(\sigma)$ and $\dot{Z}\in\Gamma(Z^*V)$
\begin{align}\label{eq:actionderivative}
    d\A(Z)(\dot{Z}) &=\int_\Sigma Z^*(\iota(\dot{Z})\Omega).
\end{align}
Fix a metric $h$ on $\Sigma$, an inner product $g$ on $V$ as in \cref{prop:globalmetric} and an Ehresmann connection on $\sigma$. Then we denote for $\rho\in T\Sigma$ the vertical projection of $dZ(\rho)$ by $\nabla_\rho Z$. The metrics induce an $L^2$-metric on $\Gamma(Z^*V)$ by $\langle\cdot,\cdot\rangle=\int_\Sigma g(\cdot,\cdot)\,d\V_h$, where $d\V_h$ is the volume form of $h$ on $\Sigma$.

Now, in a positive local orthonormal frame $\{\rho_1,\rho_2=j\rho_1\}$ of $T\Sigma$, the integrand in \cref{eq:actionderivative} is given by 
\begin{align*}
    \Omega(\dot{Z},dZ(\rho_1),dZ(\rho_2))d\V_h,
\end{align*}
and 
\begin{align*}
    \Omega(\dot{Z},dZ(\rho_1),dZ(\rho_2)) &= \omega^{\rho_2}(\dot{Z},\nabla_{\rho_1}Z)-\omega^{\rho_1}(\dot{Z},\nabla_{\rho_2}Z)+\Omega(\dot{Z},\rho_1,\rho_2)\\
    &=g\left(\dot{Z},\J(\rho_2)\nabla_{\rho_1}Z-\J(\rho_1)\nabla_{\rho_2}Z\right)-g(\dot{Z},\alpha(Z)).
\end{align*}
Here $\alpha(Z)\in V_{Z}$ is defined by $g(\dot{Z},\alpha(Z))=-\Omega(\dot{Z},\rho_1,\rho_2)$ for $\dot{Z}$ vertical. Note that $\alpha(Z)$ does not depend on the derivatives of $Z$. 
We denote 
\begin{align}\label{eq:Jnabla}
\J_\nabla Z(\rho):=\J(\rho)\left(-I\nabla_\rho-\nabla_{j\rho}\right)Z.
\end{align}
Thus, the gradient of $\A(Z)$ is locally given by $\J_\nabla Z(\rho_1)-\alpha(Z)$. We see that
\begin{align*}
    \J_\nabla Z(j\rho) =-\J(\rho)I(-I\nabla_{j\rho}+\nabla_\rho)Z=\J_\nabla Z(\rho),
\end{align*}
for any non-vanishing local vector field $\rho$. Note that $\{\rho,j\rho\}$ locally spans $T\Sigma$, so that $\J_\nabla Z(\rho)$ is only dependent on $|\rho|_h^2$. Thus, we may define $\J_\nabla Z:=\J_\nabla Z(\rho_1)$, which is independent of the chosen local orthonormal frame. We see that $\J_\nabla$ is actually a globally well-defined operator on $\Gamma(\sigma)$, which is locally given by \cref{eq:Jnabla} for any unit vector field $\rho$. Then globally, the gradient of $\A$ is given by 
\[
\grad\A(Z) = \J_\nabla Z-\alpha(Z),
\]
where the zero-order term $\alpha(Z)$ has to be globally well-defined, since all other terms in the equation are. 

As always, we may define Floer curves to be the negative gradient lines of $\A$. Denote $\pr_{\Sigma}:\R\times\Sigma\to\Sigma$ to be the projection. Then, Floer curves are given by sections of $\pr_\Sigma^* W$ satisfying
\begin{align*}
    \nabla_{\del_s}Z+\J_\nabla Z=\alpha(Z),
\end{align*}
or equivalently 
\begin{align}\label{eq:FueterFloer}
    I\nabla_{\del_s}Z+I\J_\nabla Z=I\alpha(Z).
\end{align}
We show that this is a perturbed Fueter equation. Let $M=\R\times\Sigma$ with metric $ds^2+h$, for $s$ the coordinate on $\R$. For $(\rho_s,\rho)\in STM$, we define $\wt{I}(\rho_s,\rho):=\rho_sI+\J(\rho)$. Then an easy check shows $\wt{I}(\rho_s,\rho)$ is an almost complex structure on the fiber of $\pr^*_\Sigma W$ and that \cref{eq:FueterFloer} is given by 
\[\sum_{l=1}^3\wt{I}(v_l)\nabla_{v_l}Z=I\alpha(Z),\]
in any local orthonormal frame $\{v_1,v_2,v_3\}=\{\del_s,\rho,j\rho\}$ of $M$. This is the Fueter equation as described in \cite{walpuski2017compactness} and is an elliptic equation. 

The discussion above allows us to employ the elliptic methods that made symplectic geometry flourish since the 1980's, in the setting of multisymplectic geometry. Indeed, in symplectic geometry pseudo-holomorphic curves and Floer theory are the main tools to prove results about symplectic manifolds and Hamiltonian flows on them.


In \cite{walpuski2017compactness}, the compactness theory for solutions to the Fueter equation is worked out and should guide the development of Floer theory for sections of $\sigma$, at least in the case where $\J(\rho)$ and $\J(j\rho)$ are integrable.


This line of reasoning was used in \cite{from1to2} to prove a cuplength result for perturbed harmonic maps $\T^2\to\T^{2n}$, when $\Sigma=\T^2$ and $W=\Sigma\times T^*\T^{2n}$. Similar arguments should work for $W=\wt{\Lambda^C(Y)}$, where $Y=\T^{2n}\times\Sigma$ and $\Sigma$ is a general Riemann surface. We decided to leave the formal proof out of this paper in order to focus the reader's attention on the additional geometric structures that arise when moving away from the torus to arbitrary Riemann surfaces.

\bibliography{mybib}{}

\begin{thebibliography}{DLSV15}

\bibitem[BF24]{from1to2}
Ronen Brilleslijper and Oliver Fabert.
\newblock Generalizing symplectic topology from 1 to 2 dimensions.
\newblock {\em arXiv:2412.16223}, 2024.

\bibitem[DLSV15]{deLeon}
Manuel De~Le{\'o}n, Modesto Salgado, and Silvia Vilarino.
\newblock {\em Methods of differential geometry in classical field theories:
  k-symplectic and k-cosymplectic approaches}.
\newblock World Scientific, 2015.

\bibitem[DR23]{doanrez}
Aleksander Doan and Semon Rezchikov.
\newblock {Holomorphic Floer Theory and the Fueter Equation}.
\newblock {\em arXiv:2210.12047}, 2023.

\bibitem[FG13]{forger2013multisymplectic}
Michael Forger and Leandro~G. Gomes.
\newblock Multisymplectic and polysymplectic structures on fiber bundles.
\newblock {\em Reviews in Mathematical Physics}, 25(09):1350018, 2013.

\bibitem[G{\"u}n87]{gunther1987polysymplectic}
Christian G{\"u}nther.
\newblock {The polysymplectic Hamiltonian formalism in field theory and
  calculus of variations. I. The local case}.
\newblock {\em Journal of differential geometry}, 25(1):23--53, 1987.

\bibitem[HNS09]{HNS}
Sonja Hohloch, Gregor Noetzel, and Dietmar~A. Salamon.
\newblock {Hypercontact structures and Floer homology}.
\newblock {\em Geometry \& Topology}, 13(5):2543--2617, 2009.

\bibitem[KS24]{kontsevichsoibel}
Maxim Kontsevich and Yan Soibelman.
\newblock {Holomorphic Floer theory I: exponential integrals in finite and
  infinite dimensions}.
\newblock {\em arXiv:2402.07343}, 2024.

\bibitem[MS99]{marsden1999multisymplectic}
Jerrold~E. Marsden and Steve Shkoller.
\newblock {Multisymplectic geometry, covariant Hamiltonians, and water waves}.
\newblock In {\em Mathematical Proceedings of the Cambridge Philosophical
  Society}, volume 125, pages 553--575. Cambridge University Press, 1999.

\bibitem[Sal13]{salamon2013three}
Dietmar Salamon.
\newblock {The three-dimensional Fueter equation and divergence-free frames}.
\newblock In {\em Abhandlungen aus dem Mathematischen Seminar der
  Universit{\"a}t Hamburg}, volume~83, pages 1--28. Springer, 2013.

\bibitem[Wag23]{wagner2023pseudo}
Luiz~Frederic Wagner.
\newblock {Pseudo-Holomorphic Hamiltonian Systems}.
\newblock {\em arXiv:2303.09480}, 2023.

\bibitem[Wal17]{walpuski2017compactness}
Thomas Walpuski.
\newblock {A compactness theorem for Fueter sections}.
\newblock {\em Commentarii Mathematici Helvetici}, 92(4):751--776, 2017.

\end{thebibliography}
\bibliographystyle{alpha}

\end{document}